\newcommand{\excise}[1]{}%{$\star$\textsc{#1}$\star$}
\newtheorem{thm}{Theorem}[section]
\newtheorem{lemma}[thm]{Lemma}
\newtheorem{cor}[thm]{Corollary}
\newtheorem{prop}[thm]{Proposition}
\newtheorem{conj}[thm]{Conjecture}
\theoremstyle{definition}
\newtheorem{example}[thm]{Example}
\newtheorem{remark}[thm]{Remark}
\newtheorem{defn}[thm]{Definition}
\newtheorem{prob}[thm]{Problem}
\numberwithin{equation}{section}
\renewcommand\>{\rangle}
\newcommand\<{\langle}
\newcommand\RR{\mathbb{R}}
\newcommand\ZZ{\mathbb{Z}}
\newcommand\kk{\Bbbk}
\newcommand\xx{{\mathbf x}}
\renewcommand\aa{{\mathbf a}}
\newcommand\bb{{\mathbf b}}
\newcommand\nothing{\varnothing}
\DeclareMathOperator\image{im} % Hom
\DeclareMathOperator\sign{sign} % sign
\DeclareMathOperator\rank{rank} % sign
\DeclareMathOperator\In{In} % In
\DeclareMathOperator\Tor{Tor} % Tor
\DeclareMathOperator\lcm{lcm} % lcm
\DeclareMathOperator\Ap{Ap} % Apery set
\newcommand\s{\scriptstyle}
\newcommand\phm{\phantom{-}}
\newcommand\rlm{-}
\newcommand\filleftmap{\mathord\leftarrow \mkern-6mu
	\cleaders\hbox{$\mkern-2mu \mathord- \mkern-2mu$}\hfill
	\mkern-6mu \mathord-}
\begin{document}%%%%%%%%%%%%%%%%%%%%%%%%%%%%%%%%%%%%%%%%%%%%%%%%%%%%%%%%
%%%%%%%%%%%%%%%%%%%%%%%%%%%%%%%%%%%%%%%%%%%%%%%%%%%%%%%%%%%%%%%%%%%%%%%%

%\mbox{}
%\vspace{-2ex}%-1.1743pt}
\title[Minimal free resolutions of numerical semigroup algebras]%
{Minimal free resolutions of numerical\\semigroup algebras via Ap\'ery specialization}

\author[B.~Braun]{Benjamin Braun}
\address{Mathematics Department\\University of Kentucky\\Lexington, KY 40506}
\email{benjamin.braun@uky.edu}

\author[T.~Gomes]{Tara Gomes}
\address{School of Mathematics\\University of Minnesota\\Minneapolis, MN 55455}
\email{gomes072@umn.edu}

\author[E.~Miller]{Ezra Miller}
\address{Department of Mathematics\\Duke University\\Durham, NC 27708}
\urladdr{\url{https://math.duke.edu/people/ezra-miller}}

\author[C.~O'Neill]{Christopher O'Neill}
\address{Mathematics Department\\San Diego State University\\San Diego, CA 92182}
\email{cdoneill@sdsu.edu}

\author[A.~Sobieska]{Aleksandra Sobieska}
\address{Mathematics Department\\University of Wisconsin Madison\\Madison, WI 53706}
\email{asobieska@wisc.edu}

\makeatletter
  \@namedef{subjclassname@2020}{\textup{2020} Mathematics Subject Classification}
\makeatother
\subjclass[2020]{Primary:\ 13D02, 20M14, 52B05; Secondary:\ 05E40, 13F20, 13F65}
% 13D02 Commutative algebra - Syzygies, resolutions, complexes and commutative rings
% 20M14 Group theory and generalizations - Commutative semigroups
% 52B05 Convex and discrete geometry - Combinatorial properties of polytopes and polyhedra (number of faces, shortest paths, etc.)
% 13F65 Commutative rings defined by binomial ideals, toric rings, etc. [See also 14M25]
% 05E40 Combinatorial aspects of commutative algebra
% 13F20 Polynomial rings and ideals; rings of integer-valued polynomials

\keywords{numerical semigroup, toric ideal, free resolution}

\date{18 April 2024}

\begin{abstract}
Numerical semigroups with multiplicity \(m\) are parameterized by integer points in a polyhedral cone~\(C_m\), according to Kunz.  For the toric ideal of any such semigroup, the main result here constructs a free resolution whose overall structure is identical for all semigroups parametrized by the relative interior of a fixed face of~\(C_m\).  The matrix entries of this resolution are monomials whose exponents are parametrized by the coordinates of the corresponding point in~\(C_m\), and minimality of the resolution is achieved when the semigroup is maximal embedding dimension, which is the case parametrized by the interior of~\(C_m\) itself.  
% As a corollary, we recover an observation of Kunz regarding the uniformity of Betti numbers for all toric ideals defined by semigroups parametrized by points interior to a given face of~\(C_m\).
\end{abstract}

\thanks{BB was partially supported by the National Science Foundation
through award DMS-1953785.}

\maketitle

% \setcounter{tocdepth}{1}
% \tableofcontents

%%%%%%%%%%%%%%%%%%%%%%%%%%%%%%%%%%%%%%%%%%%%%%%%%%%%%%%%%%%%%%%%%%%%%%%%%
\section{Introduction}\label{sec:intro}%%%%%%%%%%%%%%%%%%%%%%%%%%%%%%%%%%
%%%%%%%%%%%%%%%%%%%%%%%%%%%%%%%%%%%%%%%%%%%%%%%%%%%%%%%%%%%%%%%%%%%%%%%%%

Given a numerical semigroup \(S\), the corresponding semigroup algebra
has a defining toric ideal \(I_S\).  While the study of algebraic
invariants of this numerical semigroup ideal~\(I_S\) falls within 
the broader study of toric ideals,
the family of numerical semigroup ideals forms a rich and
interesting area of study that often affords more refined general
results than are known or possible for the general toric setting.  
The aim of this paper is uniformly constructed explicit free resolutions 
that are minimal for numerical semigroups with maximal embedding dimension, 
are parametrized by Ap\'ery data, and therefore specialize to minimal 
free resolutions for numerical semigroups with arbitrary embedding dimension.  

For general toric ideals, there is a substantial literature on their
resolutions.  In 1998, Peeva and Sturmfels described
minimal free resolutions for generic lattice
ideals~\cite{genericlattice}.  More recently, Tchernev gave an
explicit recursive algorithm for canonical minimal resolutions of
toric rings~\cite{minresrecursive}.  Further, Li, Miller, and Ordog
construct a canonical minimal free resolution of an arbitrary
positively graded lattice ideal with a closed-form combinatorial
description of the differential in characteristic \(0\) and all but
finitely many positive characteristics~\cite{minreslattice}. However,
these constructions are all quite general, and one would hope that in
the special case of numerical semigroups, explicit resolutions of
\(I_S\) more directly tied to the combinatorics of~$S$ are possible.

Free resolutions of $I_S$ for a numerical semigroup $S$ are known in
some special cases.  The surveys~\cite{nssyzygysurvey,nsbettisurvey}
include most results concerning special families.  We outline a few
here.  If $S$ is maximal embedding dimension (MED) and $I_S$ is
determinantal (that is, generated by the minors of a matrix), then
$I_S$ is resolved by the Eagon--Northcott
complex~\cite{gotoencomplex}.  This accounts for some, but not all,
MED numerical semigroups; a~characterization of determinantal MED
numerical semigroups is given in~\cite{determinentalmed}.  If $S$ is
generated by an arithmetic sequence, then $I_S$ is minimally resolved
by a variant of the Eagon--Northcott complex~\cite{nsfreeresarith}.
If $S$ is obtained as a gluing of two numerical semigroups $T$ and
$T'$, then a minimal free resolution of $I_S$ can be obtained from the
minimal free resolutions of $I_T$ and $I_{T'}$ via a mapping cone
construction~\cite{nsfreeresgluing}; this includes the case where
$I_S$ is complete intersection.  If $S$ has at most 3 generators, then
a minimal free resolution is known.  
% and in particular $\beta_0(I_S) \in \{2, 3\}$.  
Numerous families of 4-generated numerical semigroups have also been investigated;
% in part to illustrate some extremal Betti number behavior; 
see the survey~\cite{nsbettisurvey} for more detail.

The present work is motivated by recent papers that examine a family of convex rational polyhedra $C_m$ called \emph{Kunz cones}, one for each integer $m \ge 2$, for which each numerical semigroup $S$ with multiplicity $m$---that is, with $m = \min(S \setminus \{0\})$---corresponds to an integer point of $C_m$.  These were first introduced in~\cite{kunz}, and most subsequent papers on the  topic have employed lattice point techniques to examine enumerative questions~\cite{alhajjarkunz,kaplancounting,kunzcoords}.  However, seemingly overlooked for decades was another result in~\cite{kunz} that proved two numerical semigroups $S$ and $T$ correspond to points (relative) interior to the same face of~$C_m$ if and only if certain artinian quotients of $I_S$ and $I_T$ coincide.  Indeed, a corollary of his result, namely that MED numerical semigroups are precisely those lying in the interior of $C_m$, was the only reference to the faces of $C_m$ in the literature until a series of recent papers~\cite{wilfmultiplicity, kunzfaces1} unknowingly rederived a combinatorial version of Kunz's result:\ $S$ and $T$ lie in the same face of $C_m$ if and only if certain subsets of their divisibility posets coincide.  

Kunz also observed in~\cite{kunz} that, as a consequence of his result, $\beta_d(I_S) = \beta_d(I_T)$ for every $d$, though his approach did not allow for explicit construction of syzygies for $I_S$ and $I_T$.  The combinatorial viewpoint of~\cite{wilfmultiplicity,kunzfaces1} was recently used in~\cite{kunzfaces3} to make Kunz's enumerative result algebraic in the case $d = 0$:\ if $S$ and $T$ lie interior to the same face of~$C_m$, then minimal binomial generating sets are explicitly constructed for $I_S$ and $I_T$ that coincide in all terms except the exponents of a single variable.  

Our main result is to similarly make Kunz's result algebraic for all positive $d$:\ 
we~construct explicit free resolutions of all numerical semigroup ideals.  These resolutions are minimal when the semigroup has maximal embedding dimension (equivalently, if $S$ lies in the interior of $C_m$) and are minimalized uniformly for all numerical semigroups lying interior to the same face of the Kunz cone $C_m$.  
% This strengthen's Kunz's observation in~\cite{kunz} that $\beta_d(I_S) = \beta_d(I_T)$ 
% for every $d$, in that we prove the structure of the minimal free resolutions of 
% $I_S$ and $I_T$ concide, not just the Betti numbers.  
More precisely, our contributions are as follows.

\begin{enumerate}[1.]
\item%
We introduce the Ap\'ery toric ideal $J_S$ of $S$, an analogue of
$I_S$ that lies in a ring with $m$ variables instead of a ring with
one variable per minimal generator of $S$.  A~generating set for $J_S$
can be obtained by concatenating any generating set for $I_S$ and a
regular sequence with one element for each additional variable.

\item%
For any positive integer \(m\geq 2\), we construct a free resolution
of \(J_S\) called the \emph{Ap\'ery resolution}.  The rank of the
\(d\)-th free module depends only on \(m\) and \(d\), and the
positions of the nonzero entries of the matrices representing the
boundary maps depend only on~\(m\).  An example of this for \(m=4\) is
given in Figure~\ref{fig:m4res}, where the values \(b_{i,j}\) depend
on \(S\).

\item%
When \(S\) corresponds to a point interior to~\(C_m\), i.e., when
\(S\) is MED and thus $J_S = I_S$, the Ap\'ery resolution is a minimal
free resolution of \(I_S\).

\item\label{i:step4}%
For any numerical semigroups $S$ and~$T$ corresponding to points
interior to the same face~\(F\) of~$C_m$, we prove there exists a
uniform method for modifying the Ap\'ery resolutions of \(J_S\) and
\(J_T\) to minimal free resolutions in such a way that the resulting
ranks of the free modules and the positions of the nonzero entries of
the matrices representing the boundary maps depend only on \(m\),
\(F\), and \(d\).

% \item%
% As a result, we conclude that for any $S$ and $T$ whose points lie
% interior to the same face~\(F\) of~$C_m$, the Betti numbers of~\(I_S\)
% and~\(I_T\) are identical.
\end{enumerate}

\begin{figure}[t]
$%$
\begin{array}{c@{\:}c@{\:}c@{\:}c@{\:}c}
&
\begin{blockarray}{rcccccc}
	\\ \\
	&
	\s \textbf{1,1} &
	\s \textbf{2,2} &
	\s \textbf{3,3} &
	\s \textbf{2,1} &
	\s \textbf{3,1} &
	\s \textbf{3,2} \\
	\begin{block}{r@{\,\,}[*{6}{@{\,}l}]}
		\s \nothing & 
		x_1^2  - x_2 y^{b_{11}} &
		x_2^2  - y^{b_{22}} &
		x_3^2  - x_2 y^{b_{33}} &
		x_1x_2 - x_3 y^{b_{12}} &
		x_1x_3 - y^{b_{13}} &
		x_2x_3 - x_1 y^{b_{23}} \\
	\end{block}
\end{blockarray}
&
\\[-1em]
0 \leftarrow R & \filleftmap &
\end{array}
$%$

$%$
\begin{array}{c@{\:}c@{\:}c@{\:}c@{\:}c@{}c}
&
\begin{blockarray}{@{}r*{8}{@{}c}}
  \\ \\
  &
  \s \textbf{1,12} &
  \s \textbf{1,13} &
  \s \textbf{2,12} &
  \s \textbf{2,23} &
  \s \textbf{3,13} &
  \s \textbf{3,23} &
  \s \textbf{2,13} &
  \s \textbf{3,12} \\
  \begin{block}{@{}l@{\,\,\,}[*{8}{@{}l}]}
    \s \textbf{1,1}
       & \rlm x_2        & \rlm x_3        & \phm            & \phm
       & \phm            & \phm            &\,\phm y^{b_{23}}& \phm y^{b_{23}}
  \\
    \s \textbf{2,2}
       & \rlm y^{b_{11}} & \phm            & \,\phm x_1      & \rlm x_3
       & \phm            & \phm y^{b_{33}} & \phm            & \phm
  \\
    \s \textbf{3,3}
       & \phm            & \phm            & \phm            & \phm
       & \phm x_1        & \phm x_2        &\,\rlm y^{b_{12}}& \phm
  \\
    \s \textbf{2,1}
       & \phm x_1        & \phm            & \,\rlm x_2      & \phm y^{b_{23}}
       & \phm y^{b_{33}} & \phm            &\,\rlm x_3       & \phm
  \\
    \s \textbf{3,1}
       & \phm y^{b_{12}} & \phm x_1        & \phm            & \phm
       & \rlm x_3        & \rlm y^{b_{23}} & \phm            & \rlm x_2
  \\
    \s \textbf{3,2}
       & \phm            & \rlm y^{b_{11}} &\,\rlm y^{b_{12}}& \phm x_2
       & \phm            & \rlm x_3        &\,\phm x_1       & \phm x_1
  \\
  \end{block}
\end{blockarray}
&&
\begin{blockarray}{@{}r*{3}{@{}c}}
  &
  \s \textbf{1,[3]} &
  \s \textbf{2,[3]} &
  \s \textbf{3,[3]} \\
  \begin{block}{@{}l@{\,\,}[*{3}{@{\,}l}]}
    \s \textbf{1,12} & \phm x_3        & \rlm y^{b_{23}} &                \\
    \s \textbf{1,13} & \rlm x_2        &                 & \phm y^{b_{23}}\\
    \s \textbf{2,12} &                 & \phm x_3        & \rlm y^{b_{33}}\\
    \s \textbf{2,23} & \rlm y^{b_{11}} & \phm x_1        &                \\
    \s \textbf{3,13} & \phm y^{b_{12}} &                 & \rlm x_2       \\
    \s \textbf{3,23} &                 & \rlm y^{b_{12}} & \phm x_1       \\
    \s \textbf{2,13} & \phm x_1        & \rlm x_2        &                \\
    \s \textbf{3,12} & \rlm x_1        & \phm            & \phm x_3       \\
  \end{block}
\end{blockarray}
&
\\[-1em]
R^6 & \filleftmap & \!R^8 & \filleftmap & \!R^3 & \leftarrow 0
\end{array}
$%$
\caption{The Ap\'ery resolution for $m = 4$.  The exponents $b_{i,j}$
and $c_{i,j}$ are constants depending on the particular numerical
semigroup $S$.}
\label{fig:m4res}
\end{figure}

The term ``specialization'' in the titles of the paper and
Section~\ref{sec:specialization} refers to passage from the interior
of the Kunz cone to a face, which entails some facet inequalities
becoming equalities.  Consequently, some exponents on $y$ variables (as
in Figure~\ref{fig:m4res}) pass from positive to~$0$, which results in
the specialization that sets $y = 1$.  Further substitutions among the
$x$ variables---extraneous ones are set equal to monomials in the
others---combine in Step~\ref{i:step4} with row and column operations
to produce minimal free resolutions from the original Ap\'ery
resolution.

The remainder of this paper is structured as follows.
Section~\ref{sec:background} reviews basic properties of numerical
semigroups and Kunz cones and defines the modules and maps used in
the Ap\'ery resolution.  Section~\ref{sec:resolution} proves that the
Ap\'ery resolution is indeed a resolution and establishes the
minimality of this resolution when \(S\) is~MED.
Section~\ref{sec:specialization} describes how to modify the Ap\'ery
resolution in a uniform way for all numerical semigroups in the
interior of a fixed face of \(C_m\) to obtain a minimal resolution.
Further research directions are outlined in
Section~\ref{sec:openquesitons}.

%%%%%%%%%%%%%%%%%%%%%%%%%%%%%%%%%%%%%%%%%%%%%%%%%%%%%%%%%%%%%%%%%%%%%%%%%
\section{Kunz polyhedra and Ap\'ery resolutions}\label{sec:background}%%%
%%%%%%%%%%%%%%%%%%%%%%%%%%%%%%%%%%%%%%%%%%%%%%%%%%%%%%%%%%%%%%%%%%%%%%%%%

%%%%%%%%%%%%%%%%%%%%%%%%%%%%%%%%%%%%%%%%%%%%%%%%%%%%%%%%%%%%%%%%%%%%%%%%%
\subsection{Semigroups and toric ideals}\label{b:toric-ideals}%%%%%%%%%%%

A \emph{numerical semigroup} is a subsemigroup of $(\ZZ_{\ge 0}, +)$
that contains \(0\) and has finite complement.  Throughout this work,
fix a numerical semigroup $S \subset \ZZ_{\ge 0}$ with
\emph{multiplicity}
$$%$$ $
  \mathsf m(S) = \min(S \setminus \{0\}) = m
$$%$$ $
and write
\begin{align*}%\end{align*}
\Ap(S)
  &= \{n \in S : n - m \notin S\} \\
  &= \{0, a_1, \ldots, a_{m-1}\}
\end{align*}
for the \emph{Ap\'ery set} consisting of the minimal element of $S$
from each equivalence class modulo~$m$, where each \(a_i\) satisfies
$a_i \equiv i \bmod m$.  For convenience, define $a_0 = m$; this
convention plays an important role in our later formulas.  In
particular,
$$%$$ $
  S = \<m, a_1, \ldots, a_{m-1}\>=\<a_0, a_1, \ldots, a_{m-1}\>,
$$%$$ $
though this generating set need not be the unique minimal generating
set $\mathcal A(S)$ of $S$, such as when $a_i + a_j = a_{i+j}$ for
some $i, j$, where indices are summed modulo~\(m\).  The semigroup $S$
has \emph{maximum embedding dimension (MED)} if $\mathcal A(S) =
\{a_0, \ldots, a_{m-1}\}$.

\begin{example}\label{ex:semigroup}
The semigroup \(S=\<4,9,11,14\>\) has multiplicity \(\mathsf m(S) =
4\) and Ap\'ery set \(\Ap(S)=\{0,9,14,11\}\).  The semigroup
\(T=\<4,13,23\>\) has multiplicity \(\mathsf m(T) = 4\) and
\(\Ap(T)=\{0,13,26,23\}\).  Note that \(a_1 + a_1 = a_2\) in~\(T\),
and thus the Ap\'ery set is not a minimal generating set.
\end{example}

Let $R = \kk[x_0, x_1, \ldots, x_{m-1}]$ with the natural grading
by~$\ZZ$ via $\deg(x_i) = a_i$ and set~$y = x_0$.  The \emph{Ap\'ery
toric ideal} of~\(S\) is the kernel \(J_S = \ker(\varphi)\) of the
homomorphism
\begin{align*}%\end{align*}
  \varphi:R & \longrightarrow \kk[t] \\
        x_i & \longmapsto t^{a_i}
\end{align*}
and the \emph{defining toric ideal} of \(S\) is
$$%$$ $
  I_S = J_S \cap \kk[x_i : a_i \in \mathcal A(S)].
$$%$$ $
% of the restriction of $\varphi$ to the subring $\kk[x_i : a_i \in
% \mathcal A(S)] \subseteq R$.
For every \(1\leq i,j\leq m-1\) define
\begin{equation}\label{eq:cij}%\end{equation}
  c_{i,j} = \tfrac{1}{m}(a_i + a_j - a_{i+j}) \ge 0
\end{equation}
and
$$%$$ $
  b_{i,j}
  =
  \begin{cases}
    c_{i,j}     & \text{if } i + j \ne m \\
    c_{i,j} + 1 & \text{if } i + j  =  m.
\end{cases}
$$%$$ $
In particular, $c_{i,j} = 0$ if and only if $a_i + a_j = a_{i+j}$;
this is impossible if $i + j = m$, since $m$ is the multiplicity, so
$b_{i,j} = 0$ if and only if $c_{i,j} = 0$.  It is known that
\begin{equation}\label{eq:medbinomials}%\end{equation}
  J_S = \<x_ix_j - y^{c_{i,j}}x_{i+j} : 1 \le i \le j \le m-1\>,
\end{equation}
though it also follows from Lemma~\ref{l:idealgens} here.

\begin{example}\label{ex:semigrouptoric}
The semigroup \(S=\<4,9,11,14\>\) has \((a_1,a_2,a_3)=(9,14,11)\) and
$$%$$ $
  J_S = I_S = \<x_1^2-yx_2,
  x_1x_2-y^3x_3,x_1x_3-y^4y,x_2^2-y^6y,x_2x_3-x_1y^4,x_3^2-x_2y^2\>.
$$%$$ $
The terms here are written in a way that emphasizes the convention
\(a_0 = m\) and \(x_0 = y\), such as to produce the binomial \(x_1x_3
- y^4x_0 = x_1x_3 - y^4y = x_1x_3 - y^5\).

The semigroup \(T=\<4,13,23\>\) has \((a_1,a_2,a_3)=(13,26,23)\) and
Ap\'ery ideal
\begin{align*}%\end{align*}
  J_T &= \<x_1^2-x_2, x_1x_2-y^4x_3, x_1x_3-y^9, x_2^2-y^{13},
         x_2x_3-x_1y^9, x_3^2-x_2y^5\>
\\
      &= \<x_1^2-x_2, x_1^3-y^4x_3, x_1x_3-y^9, x_3^2-x_1^2y^5\>
\end{align*}
and defining toric ideal
$$%$$ $
  I_T = \<x_1^3-y^4x_3, x_1x_3-y^9, x_3^2-x_1^2y^5\>
      = J_T \cap \kk[y,x_1,x_3].
$$%$$ $
\end{example}

%%%%%%%%%%%%%%%%%%%%%%%%%%%%%%%%%%%%%%%%%%%%%%%%%%%%%%%%%%%%%%%%%%%%%%
\subsection{Kunz cone}\label{b:Kunz-cone}%%%%%%%%%%%%%%%%%%%%%%%%%%%%%

This subsection describes the Kunz cone and its relationship to the
values \(b_{i,j}\).  Letting \(\Ap(S)=\{0,a_1,\ldots,a_{m-1}\}\) with
$a_i \equiv i \bmod m$ for each $i$ as in
Section~\ref{b:toric-ideals}, the \emph{Ap\'ery coordinate vector} of
$S$ with respect to \(m\) is the tuple $(a_1, \ldots, a_{m-1})$.
% Since each \(a_i\equiv i \bmod m\), for each \(i\) we can write
% \(a_i=k_im+i\) for some non-negative integer \(k_i\).
% We then define the \emph{Kunz coordinate vector of \(S\) with
% respect to \(m\)} to be \((k_1,k_2,\ldots,k_{m-1})\).
The following set of linear inequalities exactly characterizes the set
of Ap\'ery coordinate vectors for numerical semigroups of multiplicity
\(m\) \cite{kunzfaces1,kunz}.

\begin{defn}\label{d:kunzcone}
For each \(m\geq 2\), the \emph{Kunz cone} \(C_m \subseteq \RR_{\ge
0}^{m-1}\) has facet inequalities
$$%$$ $
  z_i + z_j \geq z_{i+j}
  \quad \text{for} \quad
  1 \le i \le j \le m-1
  \quad \text{with} \quad 
  i+j \ne m,
$$%$$ $
where addition of subscripts is modulo \(m\).  
\end{defn}

\begin{lemma}\label{l:bij=0}
If \(S\) is a numerical semigroup of multiplicity \(m\), then
\(b_{i,j} = 0\) if and only if \(a_i+a_j = a_{i+j}\).
% Using \(a_i=k_im+i\), the latter equality holds if and only if
% either \(k_i+k_j=k_{i+j}\) for \(i+j<m\) or \(k_i+k_j+1=k_{i+j}\)
% for \(i+j>m\).
Hence the Ap\'ery coordinate vector of~\(S\) lies on the boundary
of~\(C_m\) if and only if \(b_{i,j} = 0\) for some~\(i,j\).
\end{lemma}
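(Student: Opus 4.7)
The plan is to prove the two assertions in order, both directly from the definitions. For the biconditional $b_{i,j}=0 \iff a_i+a_j=a_{i+j}$, the case split in the definition of $b_{i,j}$ means I have to handle $i+j=m$ separately from $i+j\ne m$. The first step is to dispose of the $i+j=m$ case by a multiplicity argument: when $i+j=m$, we have $a_{i+j}=a_0=m$, and for each $1\le i\le m-1$ the element $a_i$ lies in $S\setminus\{0\}$ but is not a multiple of $m$ (since $a_i\equiv i\not\equiv 0\pmod m$), so $a_i>m$. Hence $a_i+a_j>2m>m=a_{i+j}$, which forces $c_{i,j}\ge 1$ and therefore $b_{i,j}=c_{i,j}+1\ge 2>0$. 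This justifies the parenthetical in the excerpt ("impossible if $i+j=m$") and shows that the $+1$ adjustment can never accidentally undo a zero.

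With the case $i+j=m$ eliminated, both directions of the first equivalence are immediate. If $b_{i,j}=0$, the previous paragraph forces $i+j\ne m$, so $b_{i,j}=c_{i,j}=\frac{1}{m}(a_i+a_j-a_{i+j})$, and this vanishes precisely when $a_i+a_j=a_{i+j}$. Conversely, if $a_i+a_j=a_{i+j}$, then again $i+j\ne m$ (by the multiplicity bound just established), so $b_{i,j}=c_{i,j}=0$.

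For the second assertion I would read off the boundary of $C_m$ from Definition~\ref{d:kunzcone}: a point $(z_1,\dots,z_{m-1})\in C_m$ lies on $\partial C_m$ exactly when some facet inequality $z_i+z_j\ge z_{i+j}$ (with $1\le i\le j\le m-1$ and $i+j\ne m$) is tight. Specializing to $z_k=a_k$, this condition becomes $a_i+a_j=a_{i+j}$ for some admissible pair, which by the first part is equivalent to $b_{i,j}=0$ for some such pair; and since $b_{i,j}=0$ automatically forces $i+j\ne m$, the restriction on the index pair is free.

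I expect no real obstacle: both parts reduce to direct computation once the elementary fact $a_i>m$ for $i\in\{1,\dots,m-1\}$ is invoked. The only subtlety worth stating carefully is why the $+1$ bump in the definition of $b_{i,j}$ when $i+j=m$ is benign, namely that in that very case $c_{i,j}$ is already strictly positive, so the definitions of $b_{i,j}$ and $c_{i,j}$ vanish simultaneously (both never, in this case) and the equivalence with $a_i+a_j=a_{i+j}$ is preserved.
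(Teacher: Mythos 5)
Your proposal is correct and follows essentially the same route as the paper, which simply writes ``follows from the definitions'' and leans on the observation (stated in the text just before the lemma) that $a_i+a_j=a_{i+j}$ is impossible when $i+j=m$ because $m$ is the multiplicity. Your writeup makes explicit the underlying fact that $a_i>m$ for $1\le i\le m-1$, which is exactly the point the paper is implicitly invoking, so this is the paper's argument spelled out in full rather than a different approach.
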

\begin{proof}
Follows from the definitions, using~(\ref{eq:cij}) for the claim
about~$b_{i,j}$.
\end{proof}

The lemma has the following consequence \cite{kunz}.

\begin{prop}\label{prop:kunzinteriormed}
A vector $z = (z_1, \ldots, z_{m-1}) \in \ZZ_{\ge 1}^{m-1}$ with $z_i
\equiv i \bmod m$ for all~$i$ lies in $C_m$ if and only if $z$ is the
Ap\'ery coordinate vector of a numerical semigroup~$S$.  Moreover, $z$
is in the interior of $C_m$ if and only if \(S\) has maximal embedding
dimension.
% This happens if and only if every generator of the toric ideal given
% in~\eqref{eq:medbinomials} contains the variable \(y\) in some
% term.
\end{prop}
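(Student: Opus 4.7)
The plan is to prove the two equivalences separately: the first by direct construction of a numerical semigroup from an integer lattice point, and the second by invoking Lemma~\ref{l:bij=0}. For the forward direction of the first equivalence, if $z = (a_1, \ldots, a_{m-1})$ is the Apéry coordinate vector of a numerical semigroup $S$ of multiplicity $m$, then for each $1 \leq i \leq j \leq m-1$ with $i + j \neq m$, the element $a_i + a_j$ lies in $S$ and is congruent to $i + j$ modulo~$m$; minimality of $a_{(i+j) \bmod m}$ in its residue class then forces $a_i + a_j \geq a_{(i+j) \bmod m}$, which is precisely the corresponding Kunz facet inequality.

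For the converse, given $z \in C_m \cap \ZZ_{\geq 1}^{m-1}$ with $z_i \equiv i \pmod m$, I would define
\[
  S = \{0\} \;\cup\; m\ZZ_{\geq 1} \;\cup\; \bigcup_{i=1}^{m-1} \bigl(z_i + m\ZZ_{\geq 0}\bigr)
\]
and verify that $S$ is a numerical semigroup of multiplicity $m$ with $\Ap(S) = \{0, z_1, \ldots, z_{m-1}\}$. Containment of $0$ and the finite-complement condition are immediate. The central point is closure under addition: for $n, n' \in S$ with $n \equiv i$ and $n' \equiv j$ modulo~$m$, the only nontrivial case is $i, j \in \{1, \ldots, m-1\}$ with $i + j \not\equiv 0 \pmod m$, where $n \geq z_i$ and $n' \geq z_j$ combine with the Kunz inequality to yield $n + n' \geq z_i + z_j \geq z_{(i+j) \bmod m}$, placing $n + n'$ in $S$. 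Remaining cases (some residue equals~$0$, or $i+j \equiv 0$) follow immediately because $n + n'$ is then a multiple of $m$ or exceeds some $z_k$ trivially. Since $z_i$ is by construction the minimum element of $S$ in its residue class, the Apéry set is exactly as claimed.

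For the second equivalence, $z$ lies in the interior of $C_m$ if and only if every Kunz inequality is strict, i.e., $c_{i,j} > 0$ for all $1 \leq i \leq j \leq m-1$ with $i + j \neq m$; equivalently, by the formula relating $b_{i,j}$ and $c_{i,j}$, we have $b_{i,j} > 0$ for all such~$i,j$ (the case $i+j = m$ forces $b_{i,j} \geq 1$ automatically, so imposes no condition). By Lemma~\ref{l:bij=0}, this is equivalent to $a_i + a_j \neq a_{(i+j) \bmod m}$ for all $1 \leq i, j \leq m-1$ with $i + j \neq m$, which is exactly the condition that no Apéry generator $a_k$ with $k \neq 0$ can be expressed as a sum $a_i + a_j$ of two other generators from $\{a_1, \ldots, a_{m-1}\}$; together with the observation that $a_0 = m$ is always minimal generating (being the smallest nonzero element), this says $\mathcal{A}(S) = \{a_0, a_1, \ldots, a_{m-1}\}$, i.e., $S$ has maximal embedding dimension.

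The main obstacle is the case analysis in verifying closure of the constructed set~$S$ under addition, but the essential algebraic content is transparent: the Kunz inequality $z_i + z_j \geq z_{(i+j) \bmod m}$ is precisely the requirement ensuring that the sum of two residue-class-minimal elements of~$S$ lands in $S$. A minor additional bookkeeping step is the verification that reducibility of an Apéry element $a_k$ can only occur via the decomposition $a_k = a_i + a_j$ with $i, j \geq 1$, since $a_k - m \notin S$ by the very definition of the Apéry set precludes using $a_0 = m$ as a summand.
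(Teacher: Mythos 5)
The paper cites \cite{kunz} for this proposition rather than proving it, so there is no paper proof to compare with; the task reduces to checking the soundness of your argument. Your treatment of the first equivalence (the construction of $S$ and the verification that $z = \Ap(S)$-coordinates) is correct in structure, though the side claim that the constructed $S$ has multiplicity $m$ is not always true: for instance $z = (1, 2, \ldots, m-1)$ satisfies the hypotheses and yields $S = \ZZ_{\ge 0}$, which has multiplicity $1$. The first equivalence is unaffected because the Ap\'ery set with respect to $m$ is still $\{0, z_1, \ldots, z_{m-1}\}$ regardless of multiplicity, but the multiplicity claim should be dropped or qualified.

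The genuine gap is in the second equivalence. You assert that ``$a_0 = m$ is always minimal generating (being the smallest nonzero element),'' but this is exactly where the multiplicity issue bites: if some $z_i = i < m$, then $i \in S$, the multiplicity is less than $m$, and $m$ need not be a minimal generator (e.g., for $\ZZ_{\ge 0}$). Concretely, $m$ fails to be a minimal generator precisely when $m = a_i + a_j$ for some $i + j = m$, which forces $a_i = i$ and $a_j = j$. Your chain of equivalences only controls $a_k$ for $k \neq 0$, so you must separately prove that $z$ in the interior of $C_m$ implies $z_i \neq i$ for all $i$ (hence $z_i > m$, hence multiplicity $m$, hence $m$ is a minimal generator). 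This is true but requires work: taking the minimal $i$ with $z_i = i$, if $2i \neq m$ then $z_i + z_i = 2i \geq z_{2i \bmod m} \geq 2i$ forces equality (the value $2i - m$ for $z_{2i-m}$ would contradict minimality of $i$); and if $2i = m$, then for any $j \neq i$ the pair $z_i + z_j \geq z_{i+j}$ and $z_i + z_{i+j} \geq z_j$ yields $|z_j - z_{i+j}| \le m/2$, while $z_j - z_{i+j} \equiv m/2 \pmod m$, forcing one to be tight. Without some version of this argument, the direction ``interior $\Rightarrow$ MED'' is incomplete.
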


\begin{example}\label{ex:kunzm4}
The cone $C_4 \subseteq \RR_{\ge 0}^3$ is defined by the inequalities
$$%$$ $
z_2 + z_3 \ge z_1, 
\qquad
z_1 + z_2 \ge z_3, 
\qquad
2z_1 \ge z_2, 
\qquad \text{and} \qquad
2z_3 \ge z_2,
$$%$$ $
and has extremal rays generated by $(1,0,1)$, $(1,2,3)$, $(1,2,1)$,
and $(3,2,1)$.  All positive-dimensional faces of $C_4$ contain
numerical semigroups (in the sense of
Proposition~\ref{prop:kunzinteriormed}) except the rays through
$(1,0,1)$ and $(1,2,1)$.  Numerical semigroups on the rays through
$(1,2,3)$ and $(3,2,1)$ have embedding dimension 2, and numerical
semigroups in the relative interior of the facets $z_1 + z_2 = z_3$
and $z_2 + z_3 = z_1$ are complete intersections~\cite{kunzfaces2}.
In particular, a minimal free resolution for the defining toric ideal
of any semigroup in these $4$ faces is known.

The numerical semigroup $S$ from Example~\ref{ex:semigrouptoric}
corresponds to the point $(9,14,11)$ in the relative interior of
$C_4$, while $T$ corresponds to the point $(13,26,23)$ in the relative
interior of the facet $2z_1 = z_2$.  A minimal free resolution for
$J_S$ is obtained by substituting the appropriate values for $b_{i,j}$
in the free resolution in Figure~\ref{fig:m4res}, while a minimal free
resolution for $J_T$ is obtained via analogous substitution into
Figure~\ref{fig:m4spec} (at the end of
Section~\ref{sec:specialization}).  This leaves the facet $2z_3 =
z_2$, and, courtesy of the action of $\ZZ_4^*$ on $C_4$, free
resolutions for semigroups in this face can be obtained from the ones
exhibited in Figure~\ref{fig:m4spec} by interchanging 1's and 3's in
every subscript.
\end{example}

We record here the result of Kunz that seems to be overlooked in the literature concerning when numerical semigroups reside in the interior of a given face of $C_m$.  

\begin{thm}[{\cite[Propositions~2.3 and~2.6]{kunz}}]\label{t:kunzbetti}
Two numerical semigroups $S$ and $T$ with multiplicity $m$ lie in the interior of the same face of $C_m$ if and only if 
$$R/(J_S + \<y\>) \cong R/(J_T + \<y\>).$$
Moreover, in this case, $\beta_d(I_S) = \beta_d(I_T)$ for every $d$.  
\end{thm}

%%%%%%%%%%%%%%%%%%%%%%%%%%%%%%%%%%%%%%%%%%%%%%%%%%%%%%%%%%%%%%%%%%%%%%
\subsection{Modules and maps for the Ap\'ery resolution}\label{b:modules}

For any numerical semigroup~\(S\) of multiplicity~\(m\), this
subsection defines the free modules and linear maps between them that
form the \emph{Ap\'ery resolution}
$$%$$ $
  \mathcal F_\bullet: 0 \longleftarrow R \longleftarrow F_1
  \longleftarrow F_2 \longleftarrow \cdots
$$%$$ $
of~\(J_S\).  Theorem~\ref{t:medresolution} shows that it is a
resolution.  Of particular note is that the ranks of its modules and
the locations of the nonzero coefficients in the matrices representing
its linear maps depend only on~\(m\), not on the actual values of
\(\Ap(S)\).

Theorem~\ref{t:medresolution} and Corollary~\ref{c:minimal-for-med}
show that this resolution is minimal if and only if \(S\) has maximal
embedding dimension, i.e., corresponds to a point interior to~\(C_m\).
Theorem~\ref{t:specializationindependent} shows that when \(S\) lies
on the boundary of~\(C_m\), a minimal resolution of~$J_S$ can be
obtained from the Ap\'ery resolution in a manner that is uniform for
all semigroups in the interior of a fixed face of~\(C_m\),
parametrized by the $b_{i,j}$.

\subsubsection{Modules}\label{ss:modules}%%%%%%%%%%%%%%%%%%%%%%%%%%%%%

For \(d=0,1,\ldots,m-1\), define $F_d$ to be the free module over
\(R\) with formal basis elements
$$%$$ $
  \left\{e_{i,A} : i \in [m-1], \, A \subset [m-1], |A| = d, i \ge \min(A)\right\},
$$%$$ $
where $\deg(e_{i,A}) = a_i + \sum_{j \in A} a_j$.  Since every pair
\((i,A)\) such that \(|A|=d\) and \(i<\min(A)\) corresponds to a
\((d+1)\)-element subset \(\{i\}\cup A\) of \([m-1]\), it is immediate
that
\begin{align*}%\end{align*}
\rank F_d 
&= (m-1)\binom{m-1}{d} - \binom{m-1}{d+1}
= d\binom{m}{d+1} \, .
\end{align*}

\begin{example}\label{e:module}
For \(m=3\), \(F_0 = Re_\nothing\), where \(Re_\nothing =
\{re_\nothing : r\in R\}\).  Similarly,
\begin{align*}%\end{align*}
  F_1 &= Re_{1,\{1\}}+Re_{2,\{2\}}+Re_{2,\{1\}}
\\
      &= \{\alpha e_{1,\{1\}}+\beta e_{2,\{2\}}+\gamma e_{2,\{1\}}:
         \alpha,\beta,\gamma \in R\} 
\end{align*}
with \(\deg(e_{1,\{1\}})= a_1+a_1\), \(\deg(e_{2,\{2\}})=a_2+a_2 \),
and \(\deg(e_{2,\{1\}})=a_2+a_1 \).  Note that \(\rank F_1 =
1\cdot\binom{3}{1+1}\).  Finally,
$$%$$ $
  F_2 = Re_{1,12}+Re_{2,12}
$$%$$ $
with $\deg(e_{1,12}) = a_1+a_1+a_2$ and $\deg(e_{2,12}) =
a_2+a_1+a_2$.
\end{example}

\subsubsection{Maps}\label{ss:maps}%%%%%%%%%%%%%%%%%%%%%%%%%%%%%%%%%%%

A few notational conventions help to define the boundary maps between
the \(F_d\).  For $A \subseteq [m-1]$, set
$$%$$ $
  \sign(j,A)=(-1)^t
  \qquad \text{for} \qquad
  j\in A=\{\ell_0<\ell_1<\cdots <\ell_t=j <\cdots <\ell_r\}.  
$$%$$ $
For convenience, set $e_{0,A} = 0$, and for $i \in [m-1]$ with $i <
\min(A)$, define
\begin{equation}\label{eq:quotientsub}%\end{equation}
  e_{i,A} = \sum_{j \in A} \sign(j,A) e_{j,A \cup i \setminus j}\, .
\end{equation}
As a consequence of this definition of \(e_{i,A}\), for each $B
\subseteq [m-1]$,
\begin{equation}\label{eq:quotientrel}%\end{equation}
  \sum_{i \in B} \sign(i,B) e_{i,B \setminus i} = 0.
\end{equation}

With these conventions in hand, and considering $i+j$ modulo $m$ in
subscripts as usual, define the map $\partial_d: F_d \to F_{d-1}$ by
\begin{equation}\label{eq:definepartial}%\end{equation}
  e_{i,A} \mapsto \sum_{j \in A} \sign(j, A) (x_j e_{i,A \setminus j}
  - y^{b_{i,j}} e_{i+j,A \setminus j})
\end{equation}
with the exception of $d = 1$, in which case
$$%$$ $
\partial_1(e_{i,j}) = x_ix_j - y^{c_{i,j}} x_{i+j}\, .
$$%$$ $

\begin{example}\label{ex:m3res}
Figure~\ref{fig:m3res} shows the modules and maps for the case
\(m=3\).  Note that by definition the bases for the modules are
indexed by \((i,A)\) pairs, and these are used to label the rows and
columns of the matrices representing the maps.  Consider the term
\(\partial_2(e_{1,12})\), which by definition is
\begin{align*}%\end{align*}
\partial_2(e_{1,12})
  & = \left(x_1e_{1,2}-y^{b_{1,1}}e_{2,2}\right)
      - \left(x_2e_{1,1}-y^{b_{1,2}}e_{0,1}\right)  \\
  & = x_1e_{2,1} - y^{b_{1,1}} e_{2,2} - x_2e_{1,1}
\end{align*}
where the relation \(e_{1,2} - e_{2,1} = 0\) is used.  This
illustrates how the relation~\eqref{eq:quotientrel} ensures that
\(\partial_d\) is well defined.
\end{example}

\begin{example}\label{ex:m4res}
Figure~\ref{fig:m4res} shows the modules and maps for the case
\(m=4\).  The ranks of the modules and the general structure of the
maps are independent of the values of~\(\Ap(S)\) except that the
exponents on the \(y\)-variables in the matrices. The Ap\'ery
resolution of $I_S$ for $S$ introduced in Example~\ref{ex:semigroup}
is given in the upper portion of Figure~\ref{fig:determinantalEN}
(toward the end of Section~\ref{sec:resolution}).
\end{example}
  
\begin{figure}[t]
\begin{center}
$%$
\begin{array}{c@{\:}c@{\:}c@{\:}c@{\:}c}
&
\begin{blockarray}{rccc}
  \\ \\
  &
  \s \textbf{1,1} &
  \s \textbf{2,2} &
  \s \textbf{2,1} \\
  \begin{block}{r@{\,\,\,}[lll]}
    \s \nothing & 
    x_1^2 - x_2 y^{b_{11}} &
    x_2^2 - x_1 y^{b_{22}} &
    x_1x_2 - y^{b_{12}} \\
  \end{block}
\end{blockarray}
&&
\begin{blockarray}{rc@{\,\,}c}
  &
  \s \textbf{1,12} &
  \s \textbf{2,12} \\
  \begin{block}{r@{\,\,\,}[l@{\,\,}l]}
    \s \textbf{1,1} & \rlm x_2       & \phm y^{b_{22}} \\
    \s \textbf{2,2} & \rlm y^{b_{11}} & \phm x_1 \\
    \s \textbf{2,1} & \phm x_1       & \rlm x_2 \\
  \end{block}
\end{blockarray}
&
\\[-1em]
0 \leftarrow R & \filleftmap & R^3 & \filleftmap & R^2 \leftarrow 0
\end{array}
$%$
\end{center}
\caption{The Ap\'ery resolution for $m = 3$.}
\label{fig:m3res}
\end{figure}

%%%%%%%%%%%%%%%%%%%%%%%%%%%%%%%%%%%%%%%%%%%%%%%%%%%%%%%%%%%%%%%%%%%%%%%%%
\section{Maximal embedding dimension numerical semigroups}\label{sec:resolution}
%%%%%%%%%%%%%%%%%%%%%%%%%%%%%%%%%%%%%%%%%%%%%%%%%%%%%%%%%%%%%%%%%%%%%%%%%

This section proves the Ap\'ery resolution to be indeed a resolution
of~$J_S$.  The core of the proof is Schreyer's theorem, which
identifies a Gr\"obner basis (under a carefully chosen term order) for
the syzygy module of a Gr\"obner basis (see~\cite{clo2} for a thorough
overview of Schreyer's theorem).  Lemma~\ref{l:quotient} verifies
important subtleties about the boundary maps $\partial_d$: they are
consistent with the definition of $e_{i,A}$ in~\eqref{eq:quotientsub}
and~\eqref{eq:quotientrel} when $i < \min A$, and
substituting~\eqref{eq:quotientsub} into the definition of
$\partial_d$ still yields matrix entries that are monomials.

\begin{remark}\label{r:schreyerresolution}
The authors of~\cite{eisenbudschreyerfirst} produce a resolution that is isomorphic to the one defined in Section~\ref{sec:background}.  In fact, Frank-Olaf Schreyer informed us (in personal communication) that resolution of the ideals in~\eqref{eq:medbinomials} was the initial motivation for both~\cite{eisenbudschreyerfirst} and what is now known as Schreyer's theorem (see \cite[Chapter~5, (3.3)]{clo2}).  
% although no explicit matrices were written down in a published source of his.  
However, the particular form taken by our explicit matrices clarifies the sense in which our resolution is compatible with specialization in the sense of Theorem~\ref{t:specializationindependent}.  This point has some subtlety:\ obtaining a resolution isomorphic to ours need not be sufficient for the purpose of specialization (see, for instance, the resolutions and discussion in Remark~\ref{r:noteagonnorthcott}).  As~such, we include here a full proof of Theorem~\ref{t:medresolution}.  
\end{remark}

\begin{lemma}\label{l:idealgens}
The generating set~\eqref{eq:medbinomials} is a Gr\"obner
basis for $J_S$ under any term order~$\preceq$ 
on~$R$ for which $\xx^\aa y^r \succ \xx^\bb y^s$ whenever 
$a_1 + \cdots + a_{m-1} > b_1 + \cdots + b_{m-1}$, 
where~$\xx^\aa$ and~$\xx^\bb$ are monomials in $x_1,\dots,x_{m-1}$.
\end{lemma}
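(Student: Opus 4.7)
The plan is to show the stronger fact that every element of~$J_S$ reduces to $0$ under multivariate polynomial division by the binomials $f_{i,j}:=x_ix_j-y^{c_{i,j}}x_{i+j}$, for $1\le i\le j\le m-1$. By the standard characterization of Gr\"obner bases, this will simultaneously prove that these binomials generate~$J_S$ (so that \eqref{eq:medbinomials} also drops out of the lemma) and that they form a Gr\"obner basis with respect to~$\preceq$. Containment of each $f_{i,j}$ in~$J_S$ is immediate from $c_{i,j}=\tfrac{1}{m}(a_i+a_j-a_{i+j})$, since $\varphi(x_ix_j)=t^{a_i+a_j}=t^{mc_{i,j}+a_{i+j}}=\varphi(y^{c_{i,j}}x_{i+j})$. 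The hypothesis on~$\preceq$ pins down the leading term of each $f_{i,j}$ as~$x_ix_j$, because $x_ix_j$ has $(x_1,\ldots,x_{m-1})$-degree sum equal to~$2$, while $y^{c_{i,j}}x_{i+j}$ has such degree~$1$ when $i+j\not\equiv 0\pmod m$ and degree~$0$ when $i+j\equiv 0\pmod m$ (in which case $x_{i+j}=y$).

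Given $p\in J_S$, I would divide by the $f_{i,j}$ to obtain a remainder $q\in J_S$ whose support contains no monomial divisible by any leading term~$x_ix_j$. A monomial $y^r x_1^{\alpha_1}\cdots x_{m-1}^{\alpha_{m-1}}$ avoids every such divisor precisely when $\sum_{i=1}^{m-1}\alpha_i\le 1$, so every monomial appearing in~$q$ has the form $y^r$ or $y^r x_i$ for some $r\ge 0$ and some $1\le i\le m-1$. Under~$\varphi$ these map to $t^{mr}$ and $t^{mr+a_i}$, respectively. Because the Ap\'ery set meets each residue class modulo~$m$ in a single element with $a_i\equiv i\pmod m$, distinct admissible pairs $(r,i)$ yield distinct exponents of~$t$, so the corresponding monomials map to $\kk$-linearly independent elements of~$\kk[t]$. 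Consequently $\varphi(q)=0$ forces $q=0$, so $p$ itself lies in the ideal generated by the~$f_{i,j}$ and reduces to~$0$.

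The only substantive technicality is the passage from ``no monomial of~$q$ is divisible by any $x_ix_j$'' to ``every monomial of~$q$ has $(x_1,\ldots,x_{m-1})$-degree at most~$1$'' (a direct combinatorial check), together with the injectivity of~$\varphi$ on the $\kk$-span of those surviving monomials, which depends only on the residue-class description of~$\Ap(S)$. Termination of the rewriting process is guaranteed by the term-order hypothesis, since each reduction by an~$f_{i,j}$ strictly decreases the $(x_1,\ldots,x_{m-1})$-degree of the monomial being rewritten. I therefore foresee no serious obstacle: the argument is essentially a weighted normal-form rewriting combined with the residue-class separation of the Ap\'ery set.
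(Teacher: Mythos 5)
Your proof is correct and rests on the same key observation as the paper's: because the residues $a_i \bmod m$ are distinct, there is at most one monomial of the form $y^r x_i$ in any given $\ZZ$-graded degree, which is exactly the injectivity of $\varphi$ on the span of normal-form monomials that you invoke. You package this via the division algorithm and a normal-form argument, whereas the paper notes that the toric ideal $J_S$ is generated by binomials and inspects the larger term of an arbitrary binomial in $J_S$ directly, but the substance of the two arguments is identical.
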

\begin{proof}
Since $J_S$ is generated by binomials, it suffices to consider
binomials when computing initial ideals.  The key observation is that
in any graded degree, exactly one monomial in $R$ has the form $x_i
y^a$ with $a \in \ZZ_{\ge 0}$, since the graded degrees of the
variables $x_i$ are distinct modulo~$m$.  Hence the larger term
under~$\preceq$ in any nonzero binomial from~$J_S$ is divisible by
$x_ix_j = \In_\preceq(x_ix_j - y^{c_{i,j}} x_{i+j})$ for some $i, j
\in [m-1]$.  As~such,
$$%$$ $
  \In_\preceq(J_S) = \<x_ix_j : 1 \le i, j \le m-1\>,
$$%$$ $
and thus the generating set in~\eqref{eq:medbinomials} is a Gr\"obner
basis for $J_S$.
\end{proof}

The next aim is to establish that applying $\partial$ to $e_{i,A}$
when $i<\min(A)$ using the expression given
in~\eqref{eq:definepartial} is consistent with~\eqref{eq:quotientrel};
this is needed when considering the result of applying $\partial$
repeatedly.  Further, careful analysis of the use
of~\eqref{eq:quotientrel} is required when $i+j<\min(A\setminus j)$
in~\eqref{eq:definepartial}.  These issues are addressed in the
following lemma.

\begin{lemma}\label{l:quotient}
The maps $\partial_d$ respect~\eqref{eq:quotientrel}, and for $d > 1$,
the entries of each $\partial_d$ are monomials.  Furthermore,
$\mathcal F_\bullet$ is a complex.
\end{lemma}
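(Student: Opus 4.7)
My plan splits into three parts corresponding to the three assertions of the lemma: (i) $\partial_d$ respects~\eqref{eq:quotientrel}; (ii) the matrix entries of $\partial_d$ are monomials for $d > 1$; and (iii) $\mathcal F_\bullet$ is a complex. The overall setup is to work in a larger free module $\tilde F_d$ with basis $\{e_{i,A} : i \in [m-1],\ |A| = d\}$ (no constraint $i \ge \min A$), realize $F_d$ as the quotient $\tilde F_d / N_d$ where $N_d$ is generated by the elements $\rho_B := \sum_{i \in B} \sign(i,B)\,e_{i,B\setminus i}$ for $B \subseteq [m-1]$ with $|B| = d+1$, and define $\tilde\partial_d \colon \tilde F_d \to \tilde F_{d-1}$ directly by the formula~\eqref{eq:definepartial} on every basis symbol (adopting the convention $e_{0,A} = 0$).

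For Part~(i), the goal is to show $\tilde\partial_d(N_d) \subseteq N_{d-1}$, so that $\partial_d$ descends to the quotient. I would apply $\tilde\partial_d$ to $\rho_B$ and split the resulting double sum into its $x$-contributions and $y$-contributions, indexed by ordered pairs $(i,j) \in B \times B$ with $i \ne j$. The $y$-contributions pair as $(i,j) \leftrightarrow (j,i)$ and cancel via the Koszul sign identity $\sign(i,B)\sign(j,B\setminus i) = -\sign(j,B)\sign(i,B\setminus j)$ combined with the symmetry $b_{i,j} = b_{j,i}$. For the $x$-contributions, swapping the order of summation and applying the same sign identity regroups them as $-\sum_{s \in B} \sign(s,B)\,x_s\,\rho_{B\setminus s}$, which lies in~$N_{d-1}$ term-by-term.

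For Part~(ii), the analysis is a case check of when the expansion rule~\eqref{eq:quotientsub} must be invoked to rewrite $e_{i,A\setminus j}$ or $e_{i+j,A\setminus j}$ as a combination of basis elements of~$F_{d-1}$. Because $e_{i,A}$ is a basis element precisely when $i \ge \min A$, expansion of the $x$-summand $x_j e_{i,A\setminus j}$ can occur only when $j = \min A$ and $i$ lies strictly below the second-smallest element of~$A$; an analogous constraint governs the $y$-summand, with $i+j \bmod m$ in place of~$i$ (and with the summand vanishing outright when $i+j \equiv 0 \pmod m$). I would then catalog, for each basis element $e_{k,C}$ of $F_{d-1}$, the summands of $\partial_d(e_{i,A})$ whose expansion lands on~$e_{k,C}$; the structural observation is that the pair (second-index~$C$, summand type) determines the triggering~$j$ uniquely, so at most one summand contributes and every matrix entry is a single monomial.

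For Part~(iii), the direct approach is a Koszul-style cancellation on $\tilde\partial_{d-1}\tilde\partial_d(e_{i,A})$, which expands into four classes of terms $(xx)$, $(xy)$, $(yx)$, $(yy)$. The $(xx)$ terms cancel under $(j,k) \leftrightarrow (k,j)$ by the Koszul sign identity, and $(xy)$ cancels against $(yx)$ under the same swap. The $(yy)$ cancellation hinges on the key identity $c_{i,j} + c_{i+j,k} = c_{i,k} + c_{i+k,j}$ (both equal $(a_i + a_j + a_k - a_{i+j+k})/m$), which lifts to the analogous identity for the $b$-exponents because the corrections $\mathbf{1}[\,\cdot\, = m]$ on the two sides match whenever the target element $e_{i+j+k,A\setminus\{j,k\}}$ is nonzero. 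The main obstacle is the degenerate configurations where some $i+j$ or $i+k$ is $\equiv 0 \pmod m$, together with the situations in which expansion via~\eqref{eq:quotientsub} must be applied to non-basis $e_{\star,A\setminus j}$; here the $(j,k) \leftrightarrow (k,j)$ pairing is not immediate and one must show that the aggregated leftover terms again lie in~$N_{d-2}$, which can be arranged by the same mechanism as in Part~(i). Alternatively, one may bypass this bookkeeping by recognizing $\mathcal F_\bullet$ as the Schreyer resolution associated to the Gr\"obner basis of Lemma~\ref{l:idealgens}, for which the complex property is automatic.
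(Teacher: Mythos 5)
Your Part~(i) is essentially the paper's argument: you split the double sum from applying $\partial$ to the relation $\rho_B$ into $y$-terms (which cancel in pairs via the Koszul sign identity) and $x$-terms (which regroup into multiples of the relations $\rho_{B\setminus s}$). Phrasing it as a lift to a larger free module $\tilde F_d$ with $\partial$ descending through $N_d$ is just a convenient repackaging of the same calculation.

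Part~(ii) is where you depart from the paper, and the departure introduces an error. You claim that the pair (second-index~$C$, summand type) determines the triggering~$j$ uniquely. This is false. Take $m = 5$, $i = 1$, $A = \{1,2,3\}$. Then among the $x$-type summands, $j = 1$ forces an expansion (since $1 < \min\{2,3\}$), and $e_{1,\{2,3\}}$ expands to $e_{2,\{1,3\}} - e_{3,\{1,2\}}$; meanwhile $j = 2$ gives directly $e_{1,\{1,3\}}$. So the $x$-type summands with $j=1$ and $j=2$ both produce terms with second index $C = \{1,3\}$. They happen to have different first indices and hence land on distinct basis elements, so the conclusion (monomial entries) survives, but your announced mechanism does not deliver it. The paper's route is both correct and simpler: since every term of~\eqref{eq:quotientrel} is squarefree and no two equalities of the form~\eqref{eq:quotientrel} share a term, it suffices to show that no two squarefree summands of $\partial e_{i,A}$ lie in a common equality~\eqref{eq:quotientrel}, which reduces to a short three-case check that each time forces $j = k$. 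If you want to pursue your per-target catalog, you would need to refine the claimed determination to involve the full target $e_{k,C}$ (not just $C$) together with the summand type, and to verify separately that unexpanded and expanded summands of the two types cannot collide, which is essentially re-deriving the paper's lemma by harder means.

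Part~(iii) aligns with the paper's expansion into $(xx)$, $(xy)/(yx)$, and $(yy)$ classes, and you correctly isolate the $(yy)$ exponent identity $c_{i,j}+c_{i+j,k}=c_{i,k}+c_{i+k,j}$ as the key fact. One small but substantive inaccuracy: the $b$-corrections $\mathbf{1}[\cdot\equiv 0]$ on the two sides match precisely when both $i+j\not\equiv 0$ and $i+k\not\equiv 0 \pmod m$, not when the target $e_{i+j+k,A\setminus jk}$ is nonzero; the condition $i+j+k\not\equiv 0$ contributes the same correction to both sides. The relevant observation (which you gesture at but should make explicit) is that when exactly one of $i+j$ or $i+k$ vanishes modulo~$m$, the corresponding $(yy)$ partner vanishes outright because $e_{0,\cdot}=0$, so the naive $(j,k)\leftrightarrow(k,j)$ pairing breaks and the surviving lone $(yy)$ term must instead cancel against a $(yy)$ term coming from a different pair $(j',k')$ once the identification~\eqref{eq:quotientsub} is applied (the two targets coincide in~$F_{d-2}$ even though they look different in $\tilde F_{d-2}$). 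You recognize this as the main obstacle and propose to absorb the leftovers into~$N_{d-2}$, which is the right instinct and consistent with Part~(i), but it is genuine work you have not carried out. Your fallback of invoking Schreyer's theorem to get the complex property is not quite clean either: the paper's proof that $\mathcal F_\bullet$ equals the Schreyer resolution (Theorem~\ref{t:medresolution}) presupposes the complex property from this lemma, so simply citing Schreyer risks circularity unless you re-derive the differential from S-pair reductions, which again amounts to the same bookkeeping.
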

\begin{proof}
If $d = 1$, then~\eqref{eq:quotientsub} yields $e_{i,j} = e_{j,i}$,
and the first claim is immediate.  If $d > 1$, then for each $B
\subseteq [m-1]$ with $|B| = d+1$,
$$%$$ $
  \sum_{i \in B} \sign(i,B) \partial e_{i,B \setminus i}
  =
  \sum_{i \in B} \sign(i,B)
  \sum_{j \in B \setminus i} \sign(j, B \setminus i)
    (x_j e_{i,B \setminus ij} - y^{b_{i,j}} e_{i+j,B \setminus ij}),
$$%$$ $
wherein the coefficient of $y^{b_{i,j}} e_{i+j,B \setminus ij}$ for
distinct $i, j \in B$ equals
$$%$$ $
  \sign(i,B) \sign(j,B \setminus i) + \sign(j,B) \sign(i,B \setminus j) = 0
$$%$$ $
and the remaining terms yield
\begin{align*}%\end{align*}
\sum_{i \in B} \sign(i,B) \partial e_{i,B \setminus i}
  &= \sum_{i \in B} \sign(i,B) \sum_{j \in B \setminus i} \sign(j, B
     \setminus i) x_j e_{i,B \setminus ij}
\\
% &= \sum_{\{i,j\} \subseteq B} \sign(i,B)\sign(j,B \setminus
% i)x_je_{i,B \setminus ij} + \sign(j,B)\sign(i,B \setminus
% j)x_ie_{j,B \setminus ij}
% \\
  &= \sum_{j \in B} x_j \sum_{i \in B \setminus j} \sign(i,B)\sign(j,B
     \setminus i) e_{i,B \setminus ij}
\\
  &= -\sum_{j \in B} \sign(j,B) x_j \sum_{i \in B \setminus j}
     \sign(i,B \setminus j) e_{i,B \setminus ij}
\\
  &= -\sum_{j \in B} \sign(j,B) x_j \cdot 0
\\
  &= 0.
\end{align*}

Now proceed to the second claim that each $\partial_d$ is a matrix
whose entries are monomials.  Call $e_{i,A}$ \emph{squarefree} if $i
\notin A$.  Note that every term in~\eqref{eq:quotientrel} is
squarefree, and no two equalities of the form~\eqref{eq:quotientrel}
share any terms.  As such, it suffices to ensure that no two
squarefree terms in $\partial e_{i,A}$ lie in the same equality
in~\eqref{eq:quotientrel}.  To this end, fix $j, k \in A$.  If $e_{i,
A \setminus j}$ and $e_{i, A \setminus k}$ lie in the same equality
in~\eqref{eq:quotientrel}, then $(A \setminus j) \cup i = (A \setminus
k) \cup i$ and thus $j = k$.  If $e_{i+j, A \setminus j}$ and $e_{i+k,
A \setminus k}$ lie in the same equality in~\eqref{eq:quotientrel},
then $i + j \notin A$ but $i + j \in (A \cup \{i + k\}) \setminus k$,
so necessarily $i + j = i + k$ and thus $j = k$.  Lastly, if $e_{i, A
\setminus j}$ and $e_{i+k, A \setminus k}$ lie in the same equality
in~\eqref{eq:quotientrel}, then $i+k \notin A$ but $i+k \in (A
\setminus j) \cup i$, so $i + k = i$, which is impossible.

It remains to prove $\mathcal F_\bullet$ is a complex.  First, suppose
$A = \{j,k\}$ with $j < k$ and $i \ge j$.  If $i + j$, $i + k$, and $i
+ j + k$ are all nonzero, then
\begin{align*}%\end{align*}
\partial^2 e_{i,A}
  &= x_j \partial e_{i,k} - x_k \partial e_{i,j} - y^{b_{i,j}} \partial
     e_{i+j,k} + y^{b_{i,k}} \partial e_{i+k,j}
\\
  &= x_j (x_ix_k - y^{c_{i,k}}x_{i+k}) - x_k (x_ix_j - y^{c_{i,j}}x_{i+j})
  \\&\phantom{={}} - y^{b_{i,j}} (x_{i+j}x_k - y^{c_{i+j,k}}x_{i+j+k}) +
     y^{b_{i,k}} (x_{i+k}x_j - y^{c_{i+k,j}}x_{i+j+k})
\\
  &= x_{i+j}x_k (y^{c_{i,j}} - y^{b_{i,j}}) + x_{i+k}x_j (y^{c_{i,k}} -
     y^{b_{i,k}})
  \\&\phantom{={}} + x_{i+j+k} (y^{b_{i,j}} y^{c_{i+j,k}} - y^{b_{i,k}}
     y^{c_{i+k,j}}) = 0
\end{align*}
by homogeneity of $\partial$.  In the event $i + j = 0$, or $i + k =
0$, or $i + j + k = 0$, replacing $x_0$ with zero as appropriate in
the above algebra yields the desired equality.  For all remaining
cases, $|A| > 2$, and in the expansion of
\begin{align*}%\end{align*}
\partial^2 e_{i,A}
&= \sum_{j \in A} \sign(j, A) x_j \partial e_{i,A \setminus j} -
\sum_{j \in A} \sign(j, A) y^{b_{i,j}} \partial e_{i+j,A \setminus j}
\\
&= \sum_{j \in A} \sign(j, A) x_j \bigg( \sum_{k \in A \setminus j}
\sign(k, A \setminus j) (x_k e_{i,A \setminus jk} - y^{b_{i,k}}
e_{i+k,A \setminus jk}) \bigg) \\*
& \phantom{={}} - \sum_{j \in A} \sign(j, A) y^{b_{i,j}} \bigg(
\sum_{k \in A \setminus j} \sign(k, A \setminus j) (x_k e_{i+j,A
\setminus jk} - y^{b_{i+j,k}} e_{i+j+k,A \setminus jk}) \bigg),
\end{align*}
the terms $x_j x_k e_{i,A \setminus jk}$, $x_j y^{b_{i,k}}e_{i+k,A
\setminus jk}$, and $y^{b_{i,j} + b_{i+j,k}} e_{i+j+k, A \setminus
jk}$ each have coefficient
$$%$$ $
  \sign(j,A)\sign(k,A \setminus j) + \sign(k,A)\sign(j, A \setminus k) = 0
$$%$$ $
for any distinct $j, k \in A$.  
\end{proof}

\begin{thm}\label{t:medresolution}
The complex $\mathcal F_\bullet$ is a resolution.  
\end{thm}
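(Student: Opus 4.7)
My strategy is to apply Schreyer's theorem iteratively, starting from the Gr\"obner basis of Lemma~\ref{l:idealgens}. Since $\In_\preceq(J_S) = \mathfrak{m}^2 = \langle x_i x_j : 1 \le i, j \le m-1 \rangle$ is a stable monomial ideal, its minimal free resolution is known explicitly (e.g., via Eliahou--Kervaire) and has Betti numbers $d\binom{m}{d+1}$, matching $\rank F_d$. This strongly suggests that $\mathcal F_\bullet$ is a lift of that resolution to $J_S$. By Lemma~\ref{l:quotient} we already know $\mathcal F_\bullet$ is a complex, so it suffices to verify $\ker \partial_d \subseteq \image \partial_{d+1}$ at each level $d \ge 1$.

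To this end, I would introduce an iterated Schreyer order $\succ_d$ on each $F_d$: on $F_1$, compare $\mu\, e_{i,\{j\}}$ with $\nu\, e_{k,\{\ell\}}$ by first comparing $\mu x_i x_j$ with $\nu x_k x_\ell$ under the base order $\preceq$ of Lemma~\ref{l:idealgens}, breaking ties lexicographically on indices; on each higher $F_d$, use the order induced recursively by $\partial_d$ and $\succ_{d-1}$. The differential (\ref{eq:definepartial}) splits naturally into two kinds of summands: the $x_j e_{i, A \setminus j}$ terms of higher total $x$-degree, and the $y^{b_{i,j}} e_{i+j, A \setminus j}$ terms of strictly lower $x$-degree. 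Consequently the leading term of $\partial_d e_{i,A}$ under $\succ_{d-1}$ is a single monomial coming from a $x_j e_{i, A \setminus j}$ summand, and the ``initial complex'' --- obtained by retaining only leading terms --- is (up to an explicit bijection of indexing) precisely the Eliahou--Kervaire minimal free resolution of $\mathfrak{m}^2$. Schreyer's theorem then certifies that the set $\{\partial_{d+1} e_{i,A} : i \ge \min A,\ |A| = d+1\}$ is a Gr\"obner basis for $\ker \partial_d$ under $\succ_d$; in particular it generates $\ker \partial_d$ as an $R$-module, yielding $\ker \partial_d = \image \partial_{d+1}$.

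The chief obstacle is the combinatorial matching between the Ap\'ery basis $\{e_{i,A} : i \ge \min A\}$ and the Eliahou--Kervaire basis for the minimal resolution of $\mathfrak{m}^2$. The relations (\ref{eq:quotientsub}) and (\ref{eq:quotientrel}) must be shown to correspond exactly to the Schreyer reductions of the ``redundant'' syzygies that naturally appear during the iteration; equivalently, the condition $i \ge \min A$ must give a canonical choice of representative compatible with the Schreyer procedure, so that the leading terms produced at each stage are pairwise distinct and match a minimal generating set of the preceding syzygy module (not merely some generating set). One must also check that the signs $\sign(j,A)$ in (\ref{eq:definepartial}) are consistent with the signs arising from Schreyer differentials. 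Once this identification is in place, exactness of $\mathcal F_\bullet$ follows from Schreyer's theorem, together with the surjectivity $\partial_1 \twoheadrightarrow J_S$ provided by Lemma~\ref{l:idealgens}.
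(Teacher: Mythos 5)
Your strategy---iterated Schreyer's theorem with a recursively defined Schreyer order, initialized from the Gr\"obner basis of Lemma~\ref{l:idealgens}---is exactly the paper's strategy, and your identification that the leading term of $\partial_d(e_{i,A})$ must come from an $x_j e_{i,A\setminus j}$ summand (by total $x$-degree) is correct. The Eliahou--Kervaire framing is a reasonable heuristic (the paper's Remark~\ref{r:artinianreduction} makes a parallel observation via Eagon--Northcott), and the Betti-number match $d\binom{m}{d+1}$ is real, but neither observation by itself certifies exactness: a complex whose ranks happen to equal the Betti numbers of $\mathfrak m^2$ need not be exact, and ``the initial complex is EK'' is not an assertion Schreyer's theorem lets you use as a black box.

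The genuine gap is precisely the step you flag as the ``chief obstacle'' and then leave unresolved. Schreyer's theorem produces a Gr\"obner basis for $\ker\partial_{d-1}$ consisting of one syzygy $s_{\alpha,\beta}$ per pair of columns with leading terms supported on a common basis vector; to conclude that the columns of $\partial_d$ form a Gr\"obner basis, you must show that the initial term of every such $s_{\alpha,\beta}$ is divisible by the initial term of some column of $\partial_d$. The paper does this concretely: it checks that $\In_{\preceq'}(\partial_d(e_{i,A})) = x_{\max A}\, e_{i,A\setminus\max A}$ (after verifying that the substitution~\eqref{eq:quotientsub}, needed only when $\min(A)\le i<\min(A\setminus\min A)$, does not affect the leading term), shows the only nontrivial $S$-pairs arise from $e_{i,C\cup\gamma}$, $e_{i,C\cup\delta}$ with $\gamma,\delta>\max C$, identifies their leading terms as $x_\delta e_{i,C}$, and notes these are precisely the leading terms of $\partial_d(e_{i,C\cup\delta})$. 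It also confirms by induction that no leading term ever involves a redundant $e_{i,A}$ with $i<\min(A)$. Without these verifications---which occupy most of the paper's argument---your proposal is a correct plan but not yet a proof; the ``explicit bijection'' with the Eliahou--Kervaire basis you request would amount to exactly the same computation.
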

\begin{proof}
Proceed by induction on $d$ to show that the columns of the matrices
for $\partial_d$ form a Gr\"obner basis for $\ker\partial_{d-1}$.  The
case $d = 1$ is handled by Lemma~\ref{l:idealgens}, so suppose $d =
2$.  Let $\preceq'$ denote the partial order on $F_1$ given by
$x^\beta e_{k,\ell} \preceq' x^\alpha e_{i,j}$ whenever
$$%$$ $
  \In_\preceq(\partial_1(x^\beta e_{k,b})) \prec
  \In_\preceq(\partial_1(x^\alpha e_{i,a})),
$$%$$ $
or when equality holds above and $a< b $, or if equality holds above,
$a = b$, and $i < k$.  Note $x^\beta e_{j,\ell} \preceq' x^\alpha
e_{i,k} $ whenever $x^\alpha$ has higher total degree in $x_1, \ldots,
x_{m-1}$ than $x^\beta$, so
$$%$$ $
\In_{\preceq'}(\partial_2(e_{i,jk})) = x_k e_{i,j}
\qquad \text{where} \qquad
j < k \text{ and } i \ge j.$$%$$ $
Schreyer's theorem~\cite[Chapter 5, (3.3)]{clo2} implies the elements
\begin{equation}\label{eq:schreyergb}%\end{equation}
  s_{i,a; \, k,b} = \frac{L}{x_ix_a} e_{i,a} - \frac{L}{x_kx_b}
  e_{k,b} - \sum_{\ell \ge c \ge 1} f_{\ell,c} e_{\ell,c}
\qquad \text{for} \qquad
i \ge a, \, k \ge b
\end{equation}
form a Gr\"obner basis for $\ker(\partial_1)$ under $\preceq'$, where
$L = \lcm(x_ix_a, x_kx_b)$ and the $f_{\ell,c}$ are coefficients
obtained from polynomial long division when dividing
$S(\partial_1(e_{i,a}), \partial_1(e_{k,b}))$
by~\eqref{eq:medbinomials}.  In particular, we claim
$$%$$ $
  \In_{\preceq'}(\ker(\partial_1))
  =
  \<x_k e_{i,j} \mid j < k \text{ and } i \ge j\>
$$%$$ $
is generated by initial terms of the columns of $\partial_2$.  Indeed,
by construction $\In_{\preceq'}(s_{i,a; \, k,b})$ must be one of the
first two terms in~\eqref{eq:schreyergb} and
$\partial_1(\frac{L}{x_ix_a} e_{i,a}) =\partial_1( \frac{L}{x_kx_b}
e_{k,b} )$, so without loss of generality say $e_{k,b} \prec'
e_{i,a}.$ Then either $a<b \leq k$ and $x_k$ or $x_b$ appear as a
coefficient of $e_{i,a}$, or $a=b$, $a\leq i <k$ and $x_k$ appears as
a coefficient of $e_{i,a}$, so $\In_{\preceq'}(s_{i,a; \, k,b})$ is
divisible by the initial term of some column of~$\partial_2$.  This
implies that $\In_{\preceq'}(\image(\partial_2)) =
\In_{\preceq'}(\ker(\partial_1))$, which, together with
$\image(\partial_2) \subseteq \ker(\partial_1)$, implies
$\image(\partial_2) = \ker(\partial_1)$ and the columns of
$\partial_2$ form a Gr\"obner basis under $\preceq'$.

Lastly, suppose $d > 2$, let $\preceq$ denote the term order on
$F_{d-2}$ obtained inductively, and let $\preceq'$ denote the term
order on $F_{d-1}$ so that $x^\beta e_{j,B}\preceq' x^\alpha e_{i,A}$
whenever
$$%$$ $
  \In_\preceq(x^\beta \partial_{d-1}(e_{j,B}))
  \prec
  \In_\preceq(x^\alpha \partial_{d-1}(e_{i,A})),
$$%$$ $
or if equality holds above and $A$ precedes $B$ lexicographically, or
if equality holds above, $A = B$, and $i < j$.  One readily obtains
$$%$$ $
\In_{\preceq'}(\partial_d(e_{i,A})) = x_j e_{i,A \setminus j}
\qquad \text{with} \qquad
j = \max(A)
$$%$$ $
after checking the following: 
\begin{itemize}
\item%
$x^\alpha e_{k,B} \preceq' x^\beta e_{\ell,C}$ whenever $x^\beta$ has
higher total degree in $x_1, \ldots, x_{m-1}$ than $x^\alpha$;

\item%
$x_k \In_\preceq(\partial_{d-1}(e_{i,A \setminus k})) = x_\ell
\In_\preceq(\partial_{d-1}(e_{i,A \setminus \ell}))$ for all $k, \ell
\in A$; and

\item%
the substitution~\eqref{eq:quotientsub} need only be made if $\min(A)
\le i < \min(A \setminus \min(A))$, in which case $A \setminus j$
lexicographically precedes the second subscript of every summand
in~\eqref{eq:quotientrel}.
\end{itemize}

The equality $S(\partial_{d-1}(e_{i,A}), \partial_{d-1}(e_{j,B})) = 0$
holds due to initial terms having distinct basis vectors unless $i =
j$, $A = C \cup \{\gamma\}$, and $B = C \cup \{\delta\}$ for some
$\delta, \gamma \in [m-1]$ and some nonempty $C \subseteq [m-1]$ with
$\delta, \gamma > \max(C)$.  As such, Schreyer's theorem yields
$$%$$ $
  \In_{\preceq'}(\ker(\partial_{d-1})) = \<x_\delta e_{i, C} : i,
  \delta \in [m-1], \, C \subseteq [m-1], \, \delta > \max(C)\>,
$$%$$ $
and since $x_\delta e_{i, C} = \In_{\preceq'}(\partial_d(e_{i,C \cup
\delta}))$ for each $i$, $\delta$, and $C$, the columns
of~$\partial_d$ form a Gr\"obner basis for $\ker(\partial_{d-1})$.
% $$%$$ $
%   \In_{\preceq'}(\ker(\partial_{d-1})) = \In_{\preceq'}(\image(\partial_d)).
% $$%$$ $
The proof is completed by observing that induction also ensures none
of the intial terms in question involve $e_{i,A}$ with $i < \min(A)$.
\end{proof}

\begin{cor}\label{c:minimal-for-med}
The resolution $\mathcal F_\bullet$ is minimal if and only if $S$ is MED.  
\end{cor}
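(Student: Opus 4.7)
The plan is to apply the standard criterion that a graded free resolution is minimal if and only if every entry of every boundary matrix lies in the irrelevant maximal ideal $\mm = (y, x_1, \ldots, x_{m-1})$, i.e., is a non-unit of $R$. Since $\mathcal F_\bullet$ is already known to be a resolution by Theorem \ref{t:medresolution}, the task reduces to analyzing which of its matrix entries are units.

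First, for $d = 1$, each entry of $\partial_1$ is a binomial $x_i x_j - y^{c_{i,j}} x_{i+j}$, which is homogeneous of positive $S$-degree and therefore a non-unit regardless of $S$. For $d \ge 2$, inspection of \eqref{eq:definepartial} together with the substitution rule \eqref{eq:quotientsub} and the convention $x_0 = y$ shows that every nonzero entry is $\pm x_j$, always a non-unit, or $\pm y^{b_{i,j}}$, which is a non-unit if and only if $b_{i,j} \ge 1$. I would then verify that every $b_{i,j}$ for $1 \le i, j \le m-1$ actually occurs as such an exponent in some boundary matrix: for $i \ge j$ and any $k \in [m-1] \setminus \{j\}$, the expansion of $\partial_2(e_{i, \{j,k\}})$ contains the term $\pm y^{b_{i,j}} e_{i+j, \{k\}}$ (possibly rewritten via \eqref{eq:quotientsub} when $i+j < k$), and the symmetry $b_{i,j} = b_{j,i}$, immediate from \eqref{eq:cij}, handles the case $i < j$.

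Hence minimality of $\mathcal F_\bullet$ is equivalent to the condition $b_{i,j} \ge 1$ for all pairs $1 \le i, j \le m-1$. Combining Lemma \ref{l:bij=0} with Proposition \ref{prop:kunzinteriormed} finishes the argument: $b_{i,j} = 0$ for some pair iff $a_i + a_j = a_{i+j}$ for that pair, iff the Ap\'ery coordinate vector of $S$ lies on the boundary of $C_m$, iff $S$ fails to be MED. The only delicate point is confirming that no $b_{i,j}$ is ``hidden'' by cancellation coming from the substitution rule \eqref{eq:quotientsub}, but this is checked directly by reading off the $y^{b_{i,j}}$-coefficient in $\partial_2(e_{i, \{j,k\}})$ case by case.
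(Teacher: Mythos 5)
Your proposal is correct and follows the same approach as the paper's very terse proof: both apply the criterion that minimality is equivalent to the absence of nonzero constant matrix entries, observe that the only entries depending on $S$ are the monomials $y^{b_{i,j}}$, and conclude via Lemma~\ref{l:bij=0} and Proposition~\ref{prop:kunzinteriormed}. You go slightly further than the paper by verifying that each $b_{i,j}$ genuinely appears as an exponent and is not erased by the substitution~\eqref{eq:quotientsub}; this is a worthwhile check, though note that when $i + j = m$ the term $\pm y^{b_{i,j}} e_{i+j,\{k\}}$ in $\partial_2(e_{i,\{j,k\}})$ actually \emph{does} vanish (since $e_{0,A} = 0$ by convention), so those $b_{i,j}$ never occur as exponents for $d \ge 2$ --- harmlessly so, because $b_{i,j} = c_{i,j} + 1 \ge 1$ whenever $i + j = m$, so these can never contribute a constant entry in any case.
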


\begin{proof}
A resolution is minimal if and only if the matrices for $\partial_d$
contain no nonzero constant entries.  The only entries that depend on
$a_1, \ldots, a_{m-1}$ are powers of $y$, and their exponents
$b_{i,j}$ are all strictly positive precisely when $S$ is MED.
\end{proof}

\begin{remark}\label{r:noteagonnorthcott}
MED semigroups whose associated toric ideal is determinantal are
exactly those semigroups where $a_1, a_2, \ldots, a_{m-1}$ form an
arithmetic sequence (not necessarily in that order)
\cite{gotoencomplex, determinentalmed}.  In this case, $I_S$ is
resolved by the Eagon--Northcott complex~\cite{eagonnorthcott}; a
detailed treatment on the Eagon--Northcott resolution can be found
in~\cite[Appendix~A2H]{Eis05}.  The strict requirements on an MED
semigroup to make its associated toric ideal determinantal mean that
such semigroups form only a small proportion of all numerical
semigroups:\ in the Kunz cone, these semigroups lie in the union of a
finite set of affine $2$-planes, whose union cannot be the whole cone.
Although relatively few toric ideals of MED semigroup ideals are
minimally resolved by Eagon--Northcott complexes, the occasional
overlap
% between MED and Eagon--Northcott
does mean that all toric ideals for MED numerical semigroups share
Betti numbers with the Eagon--Northcott resolution of a $2 \times m$
matrix, despite the impossibility of using the Eagon--Northcott
construction to resolve most such toric ideals.

Even in the case where the ideal is determinantal, the Ap\'ery
resolution differs from the Eagon--Northcott resolution. As an
example, consider the numerical semigroup $S = \langle 4, 9, 10, 11
\rangle$, whose defining toric ideal $I_S$ is generated by the $2
\times 2$ minors of
$$%$$ $
\begin{bmatrix}
x_1 & x_2 & x_3 & y^3 \\
y^2 & x_1 & x_2 & x_3
\end{bmatrix}.
$$%$$ $
The key difference is the presentation of the generators of
$I_S$. Namely, the generators as provided in \eqref{eq:medbinomials}
are of the form $x_ix_j - x_{i+j}y^{b_{i,j}}$, while those given by
determinants may have the form $x_{i}x_j - x_{i+1}x_{j-1}$.
Figure~\ref{fig:determinantalEN} shows the Ap\'ery resolution and the
Eagon--Northcott resolution of $I_S$, with basis elements in the
Eagon--Northcott resolution ordered to mimic the Ap\'ery resolution.
It is worth noting that in the $m=3$ case, $a_1$ and $a_2$ trivially
form an arithmetic sequence, and in fact the Ap\'ery resolution and
the Eagon--Northcott resolution coincide.
%\toola{they're both parametrized families -- but the THING
%parametrizing the resolutions is different: for EN it's the matrices,
%for mult it's the exponents}
\begin{figure}[t]
  $%$
  \begin{array}{c@{\:}c@{\:}c@{\:}c@{\:}c}
    &
    \begin{blockarray}{cccccc}
      \\ \\
      \begin{block}{[*{6}{@{\,\,}l}]}
        x_1^2  - x_2 y^2 &
        x_2^2  - y^5 &
        x_3^2  - x_2 y^3 &
        x_1x_2 - x_3 y^2 &
        x_1x_3 - y^5 &
        x_2x_3 - x_1 y^3 \\
      \end{block}
    \end{blockarray}
    &
    \\[-1em]
    0 \leftarrow R & \filleftmap &
  \end{array}
  $%$

\medskip  

  $%$
  \begin{array}{c@{\:}c@{\:}c@{\:}c@{\:}c@{}c}
    &
    \begin{blockarray}{@{}*{8}{@{\,\,}c}}
      \\ \\
      \begin{block}{@{}[*{8}{@{\,\,}l}]}
        \rlm x_2        & \rlm x_3        & \phm            & \phm
      & \phm            & \phm            & \phm y^3        & \phm y^3
      \\
        \rlm y^2        & \phm            & \phm x_1        & \rlm x_3
      & \phm            & \phm y^3        & \phm            & \phm
      \\
        \phm            & \phm            & \phm            & \phm
      & \phm x_1        & \phm x_2        & \rlm y^2        & \phm
      \\
        \phm x_1        & \phm            & \rlm x_2        & \phm y^3
      & \phm y^3        & \phm            & \rlm x_3        & \phm
      \\
        \phm y^2        & \phm x_1        & \phm            & \phm
      & \rlm x_3        & \rlm y^3        & \phm            & \rlm x_2
      \\
        \phm            & \rlm y^2        & \rlm y^2        & \phm x_2
      & \phm            & \rlm x_3        & \phm x_1        & \phm x_1
      \\
      \end{block}
    \end{blockarray}
    &&
    \begin{blockarray}{@{}*{3}{@{\,\,}c}}
      \begin{block}{@{}[*{3}{@{\,\,}l}]}
        \phm x_3        & \rlm y^3 &                  \\
        \rlm x_2        &                 & \phm y^3  \\
                        & \phm x_3        & \rlm y^3  \\
        \rlm y^2 & \phm x_1        &                  \\
        \phm y^2 &                 & \rlm x_2         \\
                        & \rlm y^2 & \phm x_1         \\
        \phm x_1        & \rlm x_2        &                  \\
        \rlm x_1        & \phm            & \phm x_3         \\
      \end{block}
    \end{blockarray}
    &
    \\[-1em]
    R^6 & \filleftmap & R^8 & \filleftmap & R^3 & \leftarrow 0
  \end{array}
$%$
% \caption{The Ap\'ery resolution for $I_S$ where $S = \langle4,9,10,11\rangle$}
% \label{fig:determinantalmultiplicity}
% \end{figure}

% \begin{figure}[t]
\bigskip
\bigskip
\bigskip

  $%$
  \begin{array}{c@{\:}c@{\:}c@{\:}c@{\:}c}
    &
    \begin{blockarray}{cccccc}
      \begin{block}{[*{6}{@{\,\,}l}]}
        x_1^2  - x_2 y^2 &
        x_2^2  - x_1x_3 &
        x_3^2  - x_2 y^3 &
        x_1x_2 - x_3 y^2 &
        x_1x_3 - y^5 &
        x_2x_3 - x_1 y^3 \\
      \end{block}
    \end{blockarray}
    &
    \\[-1em]
    0 \leftarrow R & \filleftmap &
  \end{array}
  $%$

\medskip  
  
  $%$
  \begin{array}{c@{\:}c@{\:}c@{\:}c@{\:}c@{}c}
    &
    \begin{blockarray}{*{8}{@{\,\,}c}}
      \\ \\
      \begin{block}{[*{8}{@{\,\,}l}]}
      \phm x_2 &\phm x_3 &\phm x_3 &\phm     &\phm     &\phm     &\phm     &\phm y^3
      \\
      \phm y^2 &\phm     &\phm x_1 &\phm x_3 &\phm     &\phm y^3 &\phm     &\phm
      \\
      \phm     &\phm     &\phm     &\phm x_1 &\phm x_1 &\phm x_2 &\phm y^2 &\phm
      \\
      \rlm x_1 &\phm     &\rlm x_2 &\phm     &\phm y^3 &\phm     &\phm x_3 &\phm
      \\
      \phm     &\rlm x_1 &\phm     &\phm     &\rlm x_3 &\phm     &\rlm x_2 &\rlm x_2
      \\
      \phm     &\phm y^2 &\phm     &\rlm x_2 &\phm     &\rlm x_3 &\phm     &\phm x_1
      \\
      \end{block}
    \end{blockarray}
    &&
    \begin{blockarray}{*{3}{@{\,\,}c}}
      \begin{block}{[*{3}{@{\,\,}l}]}
        \rlm x_3        & \rlm y^3 &                  \\
        \phm x_2        & \phm x_3 & \phm             \\
        \phm & \rlm x_3 & \rlm y^3 &                  \\
        \phm y^2        & \phm x_1 &                  \\
        \phm            & \rlm x_1 & \rlm x_2         \\
        \phm & \phm y^2 & \phm x_1 &                  \\
        \rlm x_1        & \rlm x_2 &                  \\
        \phm            & \phm x_2 & \phm x_3         \\
      \end{block}
    \end{blockarray}
    &
    \\[-1em]
    R^6 & \filleftmap & R^8 & \filleftmap & R^3 & \leftarrow 0
  \end{array}
  $%$
\caption{The Ap\'ery resolution (above) and Eagon--Northcott
resolution (below) for $I_S$ where $S = \langle4,9,10,11\rangle$}
\label{fig:determinantalEN}
\end{figure}
\end{remark}

\begin{remark}\label{r:artinianreduction}
When $S$ is MED, quotienting the Ap\'ery resolution of~$I_S$ by the ideal~$\<y\>$, as Kunz does in Theorem~\ref{t:kunzbetti} with the ring $R/I_S$, yields a minimal resolution of the ideal $\<x_1, \ldots,x_{m-1}\>^2$ over the ambient polynomial ring $\kk[x_1, \ldots,x_{m-1}]$.  This ideal is known to be resolved by the Eagon--Northcott complex on the $2 \times m$ matrix
$$%$$ $
\begin{bmatrix}
x_1 & x_2 & \cdots & x_{m-2} & x_{m-1} & 0\\
0 & x_ 1 &x_2 & x_3 & \cdots & x_{m-1}
\end{bmatrix}.
$$%$$ $
Thus, in the MED case, the Eagon--Northcott complex ``sits inside'' the Ap\'ery resolution; indeed, it is the result of an artinian reduction of~$R/I_S$.
\end{remark}

%%%%%%%%%%%%%%%%%%%%%%%%%%%%%%%%%%%%%%%%%%%%%%%%%%%%%%%%%%%%%%%%%%%%%%%%%
\section{Specialization for arbitrary numerical semigroups}\label{sec:specialization}
%%%%%%%%%%%%%%%%%%%%%%%%%%%%%%%%%%%%%%%%%%%%%%%%%%%%%%%%%%%%%%%%%%%%%%%%%

The Ap\'ery resolution can be thought of as a family of free
resolutions, one for the Ap\'ery ideal $J_S$ of each numerical
semigroup $S$ with multiplicity $m$, that is parametrized by the
values $b_{i,j}$.  Given a numerical semigroup $S$, a free resolution
of $J_S$ is obtained by simply computing the values $b_{i,j}$ from the
Ap\'ery set of $S$ and substituting them into the Ap\'ery resolution.
By Corollary~\ref{c:minimal-for-med}, restricting to semigroups $S$ in
the interior of~$C_m$, the Ap\'ery resolutions form a parametrized
family of minimal free resolutions.

The main result of this section is
Theorem~\ref{t:specializationindependent}, which implies that for each
face $F$ of~$C_m$, there exists a family of minimal free resolutions,
one for the Ap\'ery ideal $J_S$ of each numerical semigroup $S$
indexed by the interior of~$F$, that is analogously parametrized by
the positive $b_{i,j}$.  Figure~\ref{fig:m4spec} depicts one such
resolution for the $z_2 = 2z_1$ facet of $C_4$.  Our proof of
Theorem~\ref{t:specializationindependent} is nonconstructive: it
carefully argues that there exists a change of basis for the Ap\'ery
resolution, depending only on $F$, that yields the desired minimal
free resolution of $J_S$ as a summand.  Together with
Proposition~\ref{p:freeresextravars}, which gives the algebraic
relationship between minimal resolutions of~$J_S$ and~$I_S$,
% identify to what extent
the Betti numbers of~$J_S$ and~$I_S$ can be recovered from~$F$
(Corollaries~\ref{c:ungradedbetti} and~\ref{c:gradedbetti}).

\begin{prop}\label{p:freeresextravars}
A minimal free resolution of $J_S$ can be obtained as the tensor
product of a minimal free resolution of $I_S$ with a Koszul complex.
\end{prop}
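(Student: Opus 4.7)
The plan is to exploit the observation, already noted after the definition of $J_S$, that a generating set for $J_S$ can be assembled from any generating set for $I_S$ together with a regular sequence of binomials $f_i = x_i - g_i$, one for each $i$ with $a_i \notin \mathcal A(S)$. Here $g_i$ will be a monomial in the $x_j$ with $a_j \in \mathcal A(S)$, chosen to express $a_i$ as a nonnegative integer combination of the minimal generators; that such an expression exists is immediate from the fact that $\mathcal A(S)$ generates $S$.

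First I would set $R' = \kk[x_j : a_j \in \mathcal A(S)] \subset R$, so that $R$ is a free polynomial extension of $R'$ by the variables $\{x_i : a_i \notin \mathcal A(S)\}$. Base change along $R' \hookrightarrow R$ preserves exactness and minimality, so a minimal free resolution $F_\bullet$ of $R'/I_S$ will extend to a minimal free resolution of $R/I_S R$ by tensoring with $R$ over $R'$.

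The main step, and the core obstacle, will be to verify that $\{f_i : a_i \notin \mathcal A(S)\}$ forms a regular sequence on $R/I_S R$. The strategy will be a Cohen--Macaulay/system-of-parameters argument. The ring $R'/I_S$ is the numerical semigroup algebra $\kk[S]$, a one-dimensional Noetherian domain and hence Cohen--Macaulay. Consequently $R/I_S R \cong (R'/I_S) \otimes_\kk \kk[x_i : a_i \notin \mathcal A(S)]$ is Cohen--Macaulay of Krull dimension $1 + (m - |\mathcal A(S)|)$. Using the $f_i$'s to eliminate each extra variable yields $R/(I_S R + (f_i)) \cong R'/I_S$, which has Krull dimension $1$, so $\{f_i\}$ is a system of parameters on $R/I_S R$. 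Since every system of parameters on a Cohen--Macaulay ring is a regular sequence, the desired regularity follows.

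With regularity in hand, the tensor-product resolution is a standard consequence: letting $K_\bullet$ denote the Koszul complex on the $f_i$, the complex $F_\bullet \otimes_R K_\bullet$ is a free resolution of $R/J_S$ because the $f_i$ are regular modulo $I_S R$ and $J_S = I_S R + (f_i)$. Minimality is inherited from minimality of both factors, since each $f_i$ is homogeneous of positive degree and thus lies in the irrelevant ideal of $R$, so every matrix entry in the tensor product does as well. Truncating in homological degree~$0$ then produces the advertised minimal free resolution of~$J_S$ as a tensor product of a minimal free resolution of $I_S$ (extended to $R$) with a Koszul complex.
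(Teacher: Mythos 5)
Your proof is correct and follows the same overall strategy as the paper: split $J_S$ into the toric ideal $I_S$ (viewed in the big ring) plus a set of ``elimination'' binomials $f_i$ that form a regular sequence modulo $I_S$, resolve the latter by a Koszul complex, and tensor. The paper picks $f_w = x_w - x_u x_v$ directly from the generators~\eqref{eq:medbinomials} with $b_{i,j}=0$, while you pick $f_i = x_i - g_i$ with $g_i$ a monomial in the minimal-generator variables; both choices work.

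The one genuine difference is how the regular-sequence claim is justified. The paper asserts that the $f_w$ form a regular sequence over $R/I_S$ without elaboration; you supply a Cohen--Macaulay argument, using that $\kk[S]$ is a one-dimensional Noetherian domain (hence CM), that $R/I_S R$ is a polynomial extension of it (hence CM of dimension $1+k$), and that killing the $k$ elements $f_i$ drops the dimension to exactly $1$, so by the standard characterization of regular sequences in CM rings they are regular. This is valid and self-contained, though heavier than necessary: since each $f_i = x_i - g_i$ has $g_i$ living in the subring $R'$ on which the remaining $f_j$ and $I_S$ also depend, you can peel off the variables $x_i$ one at a time and observe directly that each $f_i$ is a nonzerodivisor (indeed a polynomial variable substitution) on the corresponding quotient $(R'/I_S)[x_j : j \text{ remaining}]$. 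That elementary observation is presumably what the paper had in mind. Your minimality and base-change remarks are also correct: $R$ is free over $R'$, so extending a minimal resolution of $I_S$ stays minimal and exact, and the $f_i$ have positive degree so the Koszul differentials lie in the irrelevant ideal.
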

\begin{proof}
Non-minimality of $m, a_1, \ldots, a_{m-1}$ as generators for $S$ is
reflected in $J_S$ by binomial generators without $y$.  More
specifically, if $a_i + a_j = a_{i+j}$, then $b_{i,j}=0$ and the
binomial $x_i x_j - x_{i+j}$ appears in $J_S$.  Let $\mathcal{A}(S) =
\{m,a_{i_1}, a_{i_2}, \ldots, a_{i_r}\}$ be the elements $a_i$ that
minimally generate $S$.  Though $I_S$ naturally lives in
$\kk[y,x_{i_1},x_{i_2}, \ldots, x_{i_r}]$, consider it as an ideal in
$R$ via the natural inclusion map.  For each nonzero $w \in \Ap(S) -
\mathcal{A}(S)$, pick one of the binomials $f_w = x_w - x_u x_v$.
These binomials form a regular sequence on $R$, so the ideal $I_W$
generated by the $f_w$ is resolved by a Koszul
complex~$\mathcal{K}_\bullet$.  Writing $\mathcal{G}_\bullet$ for a
minimal free resolution of~$I_S$, the only nontrivial homology of
$\mathcal{G}_\bullet \otimes_R \mathcal{K}_\bullet$ occurs in
homological degree~$0$ and is isomorphic to $H_0(\mathcal{G}_\bullet)
\otimes H_0(\mathcal{K}_\bullet) = R/I_S \otimes_R R/I_W = R/J_S$,
where the last equality is because the $f_w$ form a regular sequence
over $R/I_S$.  Therefore $\mathcal{G}_\bullet \otimes
\mathcal{K}_\bullet$ is a minimal free resolution of $R/J_S$.
\end{proof}

\begin{example}\label{ex:tensorcomplexm4}
The underlying structure as a tensor of two resolutions is readily
seen in Figure~\ref{fig:specialization}, which resolves $J_S$ for
$\Ap(S)=\{4,a_1,2a_1,a_3\}$.  This example was obtained by computing
the Ap\'ery resolution for $J_S$ and then trimming away any constant
entries using row and column operations as described in
Theorem~\ref{t:specializationindependent}.
\end{example}

We include the proof of the following, despite its appearance in Theorem~\ref{t:kunzbetti} as recovered from~\cite{kunz}, to demonstrate how the Ap\'ery resolution maps in Theorem~\ref{t:medresolution} lend themselves to specialization to the faces of $C_m$, as well as to contrast its content with that of Theorem~\ref{t:specializationindependent}.  

\begin{cor}\label{c:ungradedbetti}
Let $S$ and $T$ be numerical semigroups corresponding to points
interior to the same face~$F$ of the Kunz cone~$\mathcal{C}_m$.  The
Ap\'ery ideals of $S$ and $T$ share the same Betti numbers, as do the
defining toric ideals of $S$ and $T$.  In particular, $\beta_d(J_S) =
\beta_d(J_T)$ and $\beta_d(I_S) = \beta_d(I_T)$ for all $d \geq 0$.
\end{cor}

\begin{proof}
Let $\mathcal F_\bullet$ and $\mathcal F_\bullet'$ be the Ap\'ery
resolutions of $J_S$ and $J_T$ respectively.  In the case that $S$ and
$T$ are both MED, so the face~$F$ is the entirety of~$C_m$, both
resolutions are minimal by Corollary~\ref{c:minimal-for-med} and have
the same modules at each homological degree, so $\beta_d(J_S) =
\beta_d(J_T)$ holds immediately.

If $\mathcal F_\bullet$ and $\mathcal F_\bullet'$ are not minimal,
then the resolutions have $\pm 1$ entries in identical places in their
resolutions, once again because $S$ and $T$ lie interior to the same
face~$F$ and thus have the same $b_{i,j} = 0$, meaning that the same
entries $\pm y^{b_{i,j}}$ become $\pm 1$.  Because the Betti numbers
of any positively graded ideal~$I$ equal the dimensions of the graded
vector spaces $\Tor_\bullet(I,\kk)$, consider $\mathcal F_\bullet
\otimes_\kk \kk$ and $\mathcal F_\bullet' \otimes_\kk \kk$.  The
differentials in these complexes are identical: they are matrices of
$0$s and $\pm 1$s with units in matching places.  Therefore, their
kernels and images are the same at each homological degree,~so
$$%$$ $
  \beta_d(J_S) = \dim \Tor_d^\kk(J_S,\kk) = \dim
  \Tor_d^\kk(J_T,\kk) = \beta_d(J_T).
$$%$$ $

Next consider $I_S$ and $I_T$.  By
Proposition~\ref{p:freeresextravars}, $\mathcal F_\bullet = (\mathcal
G_\bullet \otimes \mathcal K_\bullet)$ and $\mathcal F_\bullet' =
(\mathcal G_\bullet' \otimes \mathcal K_\bullet)$, where $\mathcal
G_\bullet$ and $\mathcal G_\bullet'$ are minimal free resolutions of
$I_S$ and $I_T$, respectively, and $\mathcal K_\bullet$ is the Koszul
resolution on the extraneous binomials.  Tensoring with $\mathcal
K_\bullet$ exerts the same invertible change on the Betti numbers of
$\mathcal G_\bullet$ and $\mathcal G_\bullet'$.  More specifically,
let
$$%$$ $
  g_S(t) = \sum\limits_{i=0}^p \beta_i(I_S) t^i
  \qquad \text{and} \qquad
  f_S(t) = \sum\limits_{i=0}^q \beta_i(J_S) t^i
$$%$$ $
be the generating functions
% polynomials whose coefficients are
for the Betti numbers of $\mathcal G_\bullet$ and $\mathcal F_\bullet$
respectively.  Since $\mathcal K_\bullet$ is a Koszul resolution of $r
= m - |\mathcal A(S)|$ elements, $f_S(t) = (1+t)^r g_S(t)$.  Thus,
$$%$$ $
  (1+t)^r g_S(t) = f_S(t) = f_T(t) = (1+t)^r g_T(t)
$$%$$ $ and so
$g_S(t) = g_T(t)$, meaning $\beta_i(I_S) = \beta_i (I_T)$ for all $i
\geq 0$.
\end{proof}

\begin{thm}\label{t:specializationindependent}
Consider the set 
$$%$$ $
  \mathcal M
  =
  \{x_i : 1 \le i \le m-1\} \cup \{y^{b_{i,j}} : 1 \le i, j \le m - 1\}
$$%$$ $
of formal symbols appearing as matrix entries in
% the definition of the
Ap\'ery resolutions.  (Lemma~\ref{l:quotient} ensures every nonzero
matrix entry is accounted for in~$\mathcal M$).  Fix a face $F$ of
$C_m$.  There is a sequence of matrices, whose entries are
$\kk$-linear combinations of formal products of elements of $\mathcal
M$, with the following property: for each numerical semigroup~$S$
indexed by the relative interior of~$F$, substituting $R$-variables
and the values $b_{i,j}$ for $S$ into the entries of each matrix
yields boundary maps for a graded minimal free resolution of~$J_S$.
% If two numerical semigroups $S$ and $S'$ lie in the relative
% interior of the same face of the Kunz cone $C_m$, then there exist
% minimal free resolutions $\mathcal G_\bullet$ and $\mathcal
% G_\bullet'$ for $J_S$ and $J_{S'}$, respectively, whose matrices are
% identical except for the exponents of the variable $y$.
% Additionally, each time $y$ appears in a matrix entry, there exist
% $i, j \in [1,m-1]$ such that the exponent of $y$ equals the value
% $b_{i,j}$ for $S$ and $S'$, respectively.
% there exists a sequence of row and column operations that
% simultaneously minimizes the Ap\'ery resolutions of $J_S$ and
% $J_{S'}$.
\end{thm}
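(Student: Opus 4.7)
The plan is to start from the Ap\'ery resolution $\mathcal F_\bullet$ of $J_S$ provided by Theorem~\ref{t:medresolution} and apply a uniform pruning procedure determined entirely by the face~$F$. By Lemma~\ref{l:bij=0}, the set $\mathcal Z(F) = \{(i,j) : b_{i,j} = 0\}$ is exactly the index set of the facet inequalities of $C_m$ that hold with equality on~$F$, so it depends only on~$F$, not on the particular $S$ indexed by the relative interior of~$F$. For every such $S$, the formal entries $\pm y^{b_{i,j}}$ with $(i,j)\in\mathcal Z(F)$ specialize to the units $\pm 1 \in \kk$, while all remaining symbols in $\mathcal M$ specialize to non-units (either a variable $x_k$ or a positive power of~$y$).

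Next, I would carry out the standard formal cancellation procedure. Treat each matrix entry of $\mathcal F_\bullet$ as an element of the commutative $\kk$-algebra of polynomials freely generated by the symbols $\mathcal M$. Whenever some $\partial_d$ contains an entry $\pm y^{b_{i,j}}$ with $(i,j)\in\mathcal Z(F)$, use it as a pivot: perform column operations on $\partial_d$ (propagated as row operations on $\partial_{d+1}$) to clear the other entries of the pivot's row, and row operations on $\partial_d$ (propagated as column operations on $\partial_{d-1}$) to clear the other entries of the pivot's column. Since the pivot specializes to $\pm 1$, the scalars needed to perform the clearing lie in $\kk$, and every resulting entry remains a $\kk$-linear combination of formal products of elements of $\mathcal M$. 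After clearing, a trivial acyclic summand $R \xleftarrow{\pm 1} R$ splits off in homological degrees $d{-}1$ and $d$, leaving a shorter complex with the same homology as $\mathcal F_\bullet$.

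The process terminates after finitely many steps because each step strictly decreases the total rank of the modules. The essential verification is that cancellation never produces a new formal entry that specializes to a unit for some $S$ in the relative interior of~$F$ but not for others. Since the Ap\'ery resolution is graded and every row/column operation respects the grading (the rows and columns cleared against the pivot share its degree), every surviving entry is homogeneous of a fixed graded degree that depends only on~$F$. Such an entry specializes to a unit only if that fixed degree is~$0$, which forces it to be a formal $\kk$-multiple of a product of symbols $y^{b_{i,j}}$ with $(i,j)\in\mathcal Z(F)$---and any such entry is itself just another pivot to be eliminated. Thus at termination every entry specializes to an element of strictly positive graded degree, so the substituted matrices form a graded minimal free resolution of~$J_S$.

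The principal obstacle is to verify that the entire procedure can be carried out as a purely symbolic, uniform operation. Care is required around the identifications~\eqref{eq:quotientsub} and~\eqref{eq:quotientrel} that define $\partial_d$, because a formal row or column operation on a module with basis elements $e_{i,A}$ must act consistently across the rewriting rules for $i<\min(A)$. However, each step is nothing more than a graded change of basis on the free modules that preserves the chain complex structure, so fixing a canonical ordering on pivots (for instance, lexicographic in $(d,i,A)$) produces a well-defined template of matrices depending only on~$F$. Substituting $R$-variables and the values $b_{i,j}$ for any $S$ in the relative interior of~$F$ into this template then yields, by construction, a graded minimal free resolution of~$J_S$.
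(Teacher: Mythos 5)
Your proposal is correct and follows essentially the same route as the paper: both start from the Ap\'ery resolution, iteratively pivot on the unit entries $\pm y^{b_{i,j}}$ with $b_{i,j}=0$ (a set determined by the face $F$ alone), track that each surviving entry remains a $\kk$-linear combination of formal products of elements of $\mathcal M$, and invoke homogeneity to conclude that a constant entry can only be a combination of degree-zero symbols and hence appears in the same location for every $S$ interior to~$F$. The only cosmetic difference is that the paper organizes the pivoting as a single pass over $i=1,\dots,m-1$ and cites \cite[Theorem~20.2]{Eis95} for the block-diagonalization, whereas you argue termination by rank decrease; and your worry about \eqref{eq:quotientsub} and \eqref{eq:quotientrel} is unnecessary, since once the Ap\'ery resolution is expressed as concrete matrices the subsequent row/column operations never reference the rewriting rules.
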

\begin{proof}
Fix a numerical semigroup $S$ with multiplicity $m$.  Let
$$%$$ $
  \mathcal N_S = \{ x_i : 1 \le i \le m-1 \} \cup \{ y^{b_{i,j}} : 1
  \le i, j \le m - 1 \text{ and } b_{i,j} > 0 \} \subseteq \mathcal M
$$%$$ $
denote the set of elements of $\mathcal M$ corresponding to
positive-degree monomials in~$R$ under the grading by $S$.  If $S$ is
MED, then $\mathcal M = \mathcal N_S$; otherwise they are distinct.

% By Lemma~\ref{l:quotient}, the entries in each matrix of the
% multiplicity resolution for $J_S$ are monomials in $\mathcal M_S$.
% By \cite[Section~5.3]{clo2},
By \cite[Theorem~20.2]{Eis95} (see also \cite[Exercises~1.10
and~1.11]{cca}), the matrices in any free resolution for $J_S$ can,
via a sequence of row and column operations that preserve homogeneity,
be turned into block diagonal matrices with $2$ blocks:\ (i)~a matrix
with no nonzero constant entries and at least one nonzero entry in
each row and column; and (ii)~a matrix with no nonconstant entries and
at most one nonzero entry in each row and column.  After doing this,
restricting to each block~(i) yields a minimal free resolution for
$J_S$.

One way to select the aforementioned row and column operations is as
follows.  Begin with the matrices $M_i$ for the maps $\partial_i$ for
the Ap\'ery resolution, and perform the following for each $i = 1, 2,
\ldots, m-1$, assuming that, as a result of prior operations, any
column of $M_i$ with a nonzero constant entry has no other nonzero
entries.

\begin{itemize}
\item%
First use nonzero constant entries of $M_i$ to clear all other entries
in their respective rows.  If $i=1$, then no such rows exist.  Fix a
row $R$ of $M_i$ with a nonzero constant entry $c$, say in column
$C_1$ with corresponding row $R_1$ in the matrix $M_{i+1}$.  For each
nonzero entry $f$ in $R$, say in a column $C_2 \ne C_1$ with
corresponding row $R_2$ in $M_{i+1}$, subtract $c^{-1} f \cdot C_1$
from~$C_2$ and add $c^{-1} f \cdot R_2$ to~$R_1$.  Once this is done,
$c$ will be the only nonzero entry in $R$, and in fact $c$ will be the
only nonzero entry in row $R$ and column $C_1$.  Moreover, since
$M_iM_{i+1} = 0$, the row~$R_1$ in $M_{i+1}$ only has entries~$0$.

\item%
Next use nonzero constant entries of $M_{i+1}$ to clear all other
entries in their respective columns. Fix a column $C$ of $M_{i+1}$
with a nonzero constant entry $c$, say in row $R_1$ with corresponding
column $C_1$ in the matrix $M_i$.  For each nonzero entry $f$ in $C$,
say in a row $R_2 \ne R_1$ with corresponding column $C_2$ in $M_i$,
subtract $c^{-1} f \cdot R_1$ from $R_2$ and add $c^{-1} f \cdot C_2$
to $C_1$.  Once this is done, $c$ will be the only nonzero entry in
column $C$, so since $M_iM_{i+1} = 0$, the column $C_1$ now only has
entries~$0$.  Moreover, all changes to $M_i$ only affect the (now all
0) column $C_1$, so $M_i$ still has the property that every nonzero
constant entry is the only nonzero entry in its row and column.

% \item%
% If no such pairs of entries exist, then any nonzero constant entry
% is the only nonzero entry in its row and column, and the
% corresponding column in the previous matrix and corresponding row in
% the next matrix both have no nonzero entries.  Permuting rows and
% columns appropriately yields a block matrix of the desired form.

\end{itemize}
Once the above operations are completed for each $i$, the rows and
columns may be permuted to obtain the desired blocks.

The key observation is that in the above sequence of row and column
operations, the entry $f$ is an existing matrix entry.  As such, after
each row or column operation, every matrix entry $g$ can be written as
a $\kk$-linear combination of products of (possibly constant) elements
of $\mathcal M$.  Moreover, if $g$ is a nonzero constant, then $g$ has
degree $0$ under the grading by $S$, so by homogeneity, the
aforementioned expression for $g$ cannot contain any monomials in
$\mathcal N_S$, since it must be a $\kk$-linear combination of
products of degree-$0$ elements of~$\mathcal M$.

Now, fix a numerical semigroup $T$ in the same face $F$ of the Kunz
cone $C_m$ as $S$.  The sets $\mathcal M \setminus \mathcal N_S$ and
$\mathcal M \setminus \mathcal N_T$ each contain $y^{b_{i,j}}$
whenever $F$ is contained in the facet $z_i + z_j = z_{i+j}$, and thus
$\mathcal N_S = \mathcal N_T$.  As a consequence of the preceding
paragraph, applying an identical sequence of row and column operations
to the Ap\'ery resolution for $J_T$ yields nonzero constant entries in
precisely the same locations at each step of the process.
% Additionally, the expression for each nonzero entry $g$ in each
% block (i) as a $\kk$-linear combination of elements of $\mathcal
% N_T$ can be obtained from the expression for the corresponding entry
% in the resolution for $J_S$ by substituting each formal symbol in
% $\mathcal N_S$ with its corresponding monomial in $\mathcal N_T$.
This completes the proof.
\end{proof}

Theorem~\ref{t:specializationindependent} yields the following graded
refinement of Corollary~\ref{c:ungradedbetti}.

\begin{cor}\label{c:gradedbetti}
For each $i \in \{0, \ldots, m-1\}$, writing $[i]_m = i + m\ZZ$,
$$%$$ $
  \sum_{b \in [i]_m} \beta_{d,b}(J_S) = \sum_{b \in [i]_m}
  \beta_{d,b}(J_{S'}).
$$%$$ $
The same relationship holds between the Betti numbers of $I_S$ and
$I_{S'}$.
\end{cor}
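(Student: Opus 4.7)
The plan is to refine the arguments of Theorems~\ref{t:specializationindependent} and~\ref{t:ungradedbetti} by tracking the $\ZZ/m\ZZ$-residue class of degrees throughout. The driving observation is that $\deg(x_i) = a_i \equiv i \pmod m$ for every $i$, so for any basis element $e_{i,A}$ of the Ap\'ery resolution, the residue class of $\deg(e_{i,A}) = a_i + \sum_{j \in A} a_j$ modulo $m$ equals $i + \sum_{j \in A} j$, a quantity that depends only on $(i,A)$ and $m$, never on the semigroup.

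For $J_S$ and $J_{S'}$, I would argue as follows. Theorem~\ref{t:specializationindependent} produces a minimal free resolution of $J_S$ via homogeneous row and column operations whose combinatorial structure depends only on $F$. Consequently, the multiset of graded degrees of the $d$-th free module in the minimal resolution is indexed by some set $\Sigma_d(F)$ of pairs $(i,A)$ determined by $F$ and $d$ alone, with the generator labeled by $(i,A)$ having degree $a_i + \sum_{j \in A} a_j$. Reducing each such degree modulo $m$ yields the multiset $\{i + \sum_{j \in A} j \bmod m : (i,A) \in \Sigma_d(F)\}$, which is independent of $S$, so
$$
\sum_{b \in [k]_m} \beta_{d,b}(J_S) = \#\Bigl\{(i,A) \in \Sigma_d(F) : i + \textstyle\sum_{j \in A} j \equiv k \pmod m\Bigr\} = \sum_{b \in [k]_m} \beta_{d,b}(J_{S'}).
$$

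For $I_S$ and $I_{S'}$, I would invoke Proposition~\ref{p:freeresextravars} to write a minimal free resolution of $J_S$ as $\mathcal G_\bullet \otimes_R \mathcal K_\bullet$, where $\mathcal G_\bullet$ resolves $I_S$ minimally and $\mathcal K_\bullet$ is the Koszul complex on a regular sequence of binomials $f_w = x_w - x_u x_v$ indexed by a set $W$ of Ap\'ery indices determined by $F$, with $\deg(f_w) = a_w \equiv w \pmod m$. For each $k \in \{0,\ldots,m-1\}$, set
$$
f_S^{(k)}(t) = \sum_d \sum_{b \in [k]_m} \beta_{d,b}(J_S)\, t^d, \qquad g_S^{(k)}(t) = \sum_d \sum_{b \in [k]_m} \beta_{d,b}(I_S)\, t^d,
$$
and analogously $h^{(k)}(t)$ for $\mathcal K_\bullet$ (depending only on $F$). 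The tensor-product Betti formula yields the convolution $f_S^{(k)}(t) = \sum_{k_1+k_2 \equiv k \,(m)} g_S^{(k_1)}(t)\, h^{(k_2)}(t)$, which assembles into a matrix equation $\mathbf f_S(t) = H(t)\,\mathbf g_S(t)$ with $H(t)$ determined by $F$. The main technical point is the invertibility of $H(t)$: since only the empty subset of $W$ contributes a generator of $\mathcal K_\bullet$ in $t$-degree $0$, we have $H(0) = I$, so $H(t)$ is invertible as a matrix of polynomials. Combined with $\mathbf f_S = \mathbf f_{S'}$ from the first step, this forces $\mathbf g_S = \mathbf g_{S'}$ and completes the proof.
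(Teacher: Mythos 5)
Your proof is correct and follows the paper's own (terse) approach, which simply invokes Theorem~\ref{t:specializationindependent} for the $J_S$ claim and Proposition~\ref{p:freeresextravars} for the $I_S$ claim; you have accurately filled in both steps by tracking residues modulo $m$ through the uniform change of basis and via the circulant convolution with the Koszul factor. One small wording caveat: $H(0)=I$ gives only that $\det H(t)$ has nonzero constant term, hence $H(t)$ is nonsingular (invertible over $\kk(t)$, which is all you need to cancel), but not invertible over $\kk[t]$ as written.
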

\begin{proof}
Apply Theorem~\ref{t:specializationindependent} for the first claim,
and subsequently apply Proposition~\ref{p:freeresextravars} for the
final claim.
\end{proof}

\begin{figure}[t]\label{fig:specialization}
{\footnotesize
$%$
\begin{array}{c@{\:}c@{\:}c@{\:}c@{\:}c}
&
\begin{blockarray}{rcccc}
  &
  \s \textbf{1,1} &
  \s \textbf{3,3} &
  \s \textbf{2,1} &
  \s \textbf{3,1} \\
  \begin{block}{r@{\,\,}[@{\,\,}l@{\,}|*{3}{@{\,\,}l}]}
    \s \textbf{000} & 
    x_1^2  - x_2 &
    x_3^2  - x_1^2 y^{b_{33}} &
    x_1^3 - x_3 y^{b_{12}} &
    x_1x_3 - y^{b_{13}} \\
  \end{block}
\end{blockarray}
&
\\[-1em]
0 \leftarrow R & \filleftmap &
\end{array}
$%$

$%$
\begin{array}{c@{\:}c@{\:}c@{\:}c@{\:}c@{}c}
&
\begin{blockarray}{@{}r*{5}{@{\,\,}c}}
  \\
  &
  \s \textbf{3,23} &
  \s \textbf{2,12} &
  \s \textbf{2,13} &
  \s \textbf{3,13} &
  \s \textbf{3,12} \\
  \begin{block}{@{}l@{\,\,}[*{3}{@{\,\,}l}|*{2}{@{\,\,}l}@{\,}]}
    \s \textbf{1,1} & \phm x_3^2 - x_1^2 y^{b_{33}} & \phm x_1^3 - x_3 y^{b_{12}} & \phm x_1x_3 - y^{b_{13}} & \phm                        & \phm                      \\
    \cline{2-6}
    \s \textbf{3,3} & \rlm (x_1^2 - x_2)            & \phm                        & \phm                     & \phm x_1                    & \rlm y^{b_{12}}           \\
    \s \textbf{2,1} & \phm                          & \rlm (x_1^2 - x_2)          & \phm                     & \phm y^{b_{33}}             & \rlm x_3                  \\
    \s \textbf{3,1} & \phm                          & \phm                        & \rlm (x_1^2 - x_2)       & \rlm x_3                    & \phm x_1^2                \\
  \end{block}
\end{blockarray}
&&
\begin{blockarray}{@{}r*{3}{@{\,\,}c}}
  &
  \s \textbf{3,[3]} &
  \s \textbf{2,[3]} \\
  \begin{block}{@{}l@{\,\,}[*{3}{@{\,\,}l}@{\,}]}
    \s \textbf{3,23} & \phm x_1         & \rlm y^{b_{12}}  \\
    \s \textbf{2,12} & \phm y^{b_{33}}  & \rlm x_3         \\
    \s \textbf{2,13} & \rlm x_3         & \phm x_1^2       \\
    \cline{2-3}
    \s \textbf{3,13} & \phm x_1^2 - x_2 & \phm             \\
    \s \textbf{3,12} & \phm             & \phm x_1^2 - x_2 \\
  \end{block}
\end{blockarray}
&
\\[-1em]
R^4 & \filleftmap & R^5 & \filleftmap & R^2 & \leftarrow 0
% \\
% \begin{array}{@{\hspace{-2ex}}c@{\hspace{-2ex}}}
% \s 1|100 \\
% \s 3|001 \\
% \s 2|100 \\
% \s 3|100 \\
% \s 2|010 \\
% \s 3|010 \\
%          \\
%          \\
% \end{array}
% &&
% \begin{array}{@{\hspace{-2ex}}c@{\hspace{-2ex}}}
% \s 1|110 \\
% \s 1|101 \\
% \s 2|110 \\
% \s 2|011 \\
% \s 3|101 \\
% \s 3|011 \\
% \s 2|101 \\
% \s 3|110 \\
% \end{array}
% &&
% \begin{array}{@{\hspace{-2ex}}c@{\hspace{-2ex}}}
% \s 1|111 \\
% \s 2|111 \\
% \s 3|111 \\
%          \\
%          \\
%          \\
%          \\
%          \\
% \end{array}
\end{array}
$%$
\caption{A specialization of the $m = 4$ Ap\'ery resolution where
$b_{11} = 0$, so $a_2 = 2a_1$.  Note this forces $b_{13} = b_{23}$.}
\label{fig:m4spec}
}
\end{figure}

%%%%%%%%%%%%%%%%%%%%%%%%%%%%%%%%%%%%%%%%%%%%%%%%%%%%%%%%%%%%%%%%%%%%%%%%%
\section{Open questions}\label{sec:openquesitons}%%%%%%%%%%%%%%%%%%%%%%%%
%%%%%%%%%%%%%%%%%%%%%%%%%%%%%%%%%%%%%%%%%%%%%%%%%%%%%%%%%%%%%%%%%%%%%%%%%

Several of the open questions presented here relate to the defining
toric ideal~$I_S$.  One of the main results of~\cite{kunzfaces1}
identifies a finite poset corresponding to each face~$F$ of~$C_m$,
called the \emph{Kunz poset} of~$F$.  If a point interior to~$F$
indexes a numerical semigroup~$S$, then this poset coincides with the
divisibility poset of~$\Ap(S)$.  In~\cite{kunzfaces3}, the Kunz poset
of $F$ is used to obtain a parametrized family of minimal binomial
generating sets, one for the defining toric ideal $I_S$ of each
numerical semigroup $S$ in $F$.  The last three binomials in the first
matrix in Figure~\ref{fig:specialization} constitute one such example
for the relevant facet $F$ of $C_4$.  This provides a natural
candidate for the first matrix in the resolution conjectured as
follows.

\begin{conj}\label{conj:parametrizedfreeres}
For each face $F$ of $C_m$, there exists a parametrized family of
minimal resolutions, one for the defining toric ideal $I_S$ of each
numerical semigroup $S$ indexed by the interior of~$F$, akin to those
obtained in Theorem~\ref{t:specializationindependent} for Ap\'ery
ideals.
\end{conj}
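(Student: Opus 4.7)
The plan is to extract a parametrized family of minimal free resolutions of $I_S$ from the one for $J_S$ provided by Theorem~\ref{t:specializationindependent}, using the tensor product structure of Proposition~\ref{p:freeresextravars}. First, for a fixed face $F$ of~$C_m$, identify the set $E_F \subseteq [m-1]$ of extraneous indices, i.e., those $w$ such that $a_w \in \Ap(S) \setminus \mathcal{A}(S)$ for every $S$ with coordinates interior to~$F$. By Lemma~\ref{l:bij=0}, $w \in E_F$ precisely when $F$ is contained in some facet $z_u + z_v = z_w$, so $E_F$ depends only on~$F$. For each $w \in E_F$ fix a uniform choice of decomposition $w \equiv u + v \pmod m$ with $b_{u,v} = 0$ on $F$, yielding a uniform regular sequence $\{f_w = x_u x_v - x_w : w \in E_F\}$ whose Koszul complex $\mathcal{K}_\bullet^F$ depends only on~$F$.

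Second, by Proposition~\ref{p:freeresextravars}, the minimal resolution $\mathcal{F}_\bullet^S$ of $J_S$ from Theorem~\ref{t:specializationindependent} is isomorphic as a complex to $\mathcal{G}_\bullet^S \otimes_{R_F} \mathcal{K}_\bullet^F$, where $R_F = \kk[y, x_i : i \in [m-1] \setminus E_F]$ and $\mathcal{G}_\bullet^S$ is some minimal free resolution of $I_S$ over $R_F$. The key technical step will be to establish that this isomorphism can be chosen uniformly in~$S$. As suggested by Figure~\ref{fig:m4spec}, one expects the matrices of $\mathcal{F}_\bullet^S$ to admit a natural block structure reflecting the tensor decomposition, with copies of $\pm f_w$ appearing in prescribed positions as Koszul differentials. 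The aim is to refine the row and column operations in the proof of Theorem~\ref{t:specializationindependent} so that the resulting matrices become block-triangular with respect to a filtration by ``Koszul wedge degree'' in $\mathcal{K}_\bullet^F$, uniformly in~$S$.

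Third, assuming such uniform compatibility, define $\mathcal{G}_\bullet^S$ to be the block of $\mathcal{F}_\bullet^S$ corresponding to Koszul wedge degree zero. Its matrix entries will then be $\kk$-linear combinations of products of elements of a formal symbol set depending only on~$F$, establishing the conjecture. Betti-number compatibility, already guaranteed by Theorem~\ref{t:ungradedbetti} and Corollary~\ref{c:gradedbetti}, ensures that the ranks of the modules in $\mathcal{G}_\bullet^S$ depend only on $F$ and the homological degree, matching the structural uniformity required.

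The main obstacle will be verifying the uniform compatibility between the minimalizing row and column operations and the Koszul tensor structure: although each individual $\mathcal{F}_\bullet^S$ admits \emph{some} tensor decomposition with $\mathcal{K}_\bullet^F$, choosing these decompositions coherently as $S$ varies over the interior of $F$ is nontrivial. A potential alternative, which may sidestep this difficulty, is to mimic the proof of Theorem~\ref{t:medresolution} directly for $I_S$: apply Schreyer's theorem iteratively to the parametrized family of minimal binomial generating sets for $I_S$ constructed in~\cite{kunzfaces3} (using a uniform term order on $R_F$) to produce a uniform though possibly non-minimal resolution of $I_S$, then apply the minimalization argument of Theorem~\ref{t:specializationindependent} directly to that resolution. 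The challenge in this alternative route is verifying that the generating sets of~\cite{kunzfaces3} form a Gr\"obner basis under a single term order that works simultaneously for all $S$ indexed by the interior of~$F$, and that Schreyer's inductive step yields matrices whose nonzero-entry positions are governed only by~$F$.
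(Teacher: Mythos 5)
The statement you are addressing is Conjecture~\ref{conj:parametrizedfreeres}, which the paper explicitly leaves open; there is no proof in the paper to compare against, and the authors pose Problem~\ref{prob:explicitconstruction} precisely because such a parametrized resolution of $I_S$ is not yet in hand.  What you have written is a plan of attack, not a proof, and to your credit you flag the central gaps yourself.  It is worth being precise about why each one is substantive.

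On the tensor-decomposition route: Proposition~\ref{p:freeresextravars} gives, for each fixed $S$, an \emph{abstract} isomorphism $\mathcal F_\bullet^S \cong \mathcal G_\bullet^S \otimes_R \mathcal K_\bullet^F$ (note the tensor is over $R$, not over $R_F$, in the paper's K\"unneth argument).  Minimal free resolutions are unique only up to non-canonical isomorphism, and nothing in the proof of Theorem~\ref{t:specializationindependent} produces a splitting of the minimalized Ap\'ery resolution that varies coherently with~$S$: the row and column operations there are chosen only to isolate constant entries, and there is no reason they respect a filtration by Koszul wedge degree.  Your phrase ``assuming such uniform compatibility'' is exactly the missing theorem.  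Establishing it would require a genuinely new argument — one would have to show that the change of basis in Theorem~\ref{t:specializationindependent} can be carried out in a way that is simultaneously homogeneous, uniform over the face~$F$, and compatible with a tensor factorization by $\mathcal K_\bullet^F$; none of these three constraints is automatic, and their joint satisfiability is the heart of the conjecture.

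On the alternative Schreyer route: the generating sets of~$I_S$ from~\cite{kunzfaces3} are minimal binomial generating sets, not Gr\"obner bases, and Lemma~\ref{l:idealgens} does not transfer.  That lemma relied on the fact that in any fixed graded degree of $R$ exactly one monomial has the form $x_i y^a$, so the leading term of every binomial in $J_S$ is a product $x_i x_j$.  Once the extraneous variables $x_w$, $w\in E_F$, are dropped, binomials of $I_S$ can have both terms involving several $x_i$'s (as in the example $x_1^3 - x_3 y^{b_{12}}$ in Figure~\ref{fig:m4spec}), the clean dichotomy between leading terms and tails breaks, and the initial ideal of $I_S$ under a fixed term order can depend on more of the geometry of the Ap\'ery coordinate vector than just which face it lies in.  So the claim that a single term order works for all $S$ interior to~$F$, and that Schreyer's construction then has structurally uniform output, is itself an open problem at least as hard as the conjecture.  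In short, both routes are reasonable directions to explore, but each has an unresolved core step; neither constitutes a proof.
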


Unlike the proof of Theorem~\ref{t:specializationindependent}, the
proofs in~\cite{kunzfaces3} are constructive, utilizing the Kunz poset
structure of the face $F$ containing $S$.  Intuitively, the set of
factorizations of elements of $\Ap(S)$ (a set which depends only on
$F$) forms the staircase of a monomial ideal $M$.  If each element of
$\Ap(S)$ factors uniquely, then $I_S$ has exactly one binomial
generator for each minimal monomial generator of $M$.  If some
elements of $\Ap(S)$ have multiple factorizations, then
graph-theoretic methods can be used to partition some of the minimal
generators of $M$ into blocks (called \emph{outer Betti elements}) and
construct one minimal binomial generator of $I_S$ for each block.  We
refer the reader to~\cite[Section~5]{kunzfaces3} for the full
construction; additional examples can be found in~\cite{minprescard}.

The non-constructive nature of the proof of
Theorem~\ref{t:specializationindependent}, along with the constructive
nature of the proofs in~\cite{kunzfaces3}, motivates the following.

% Kunz posets.  Suggest starting with Ap\'ery sets of unique expression.  Staircase diagrams, minimal corners.  

\begin{prob}\label{prob:explicitconstruction}
Find an explicit combinatorial (e.g., poset-theoretic) construction of
the matrices obtained in Theorem~\ref{t:specializationindependent} and
conjectured in Conjecture~\ref{conj:parametrizedfreeres}.
\end{prob}

% Following Remark~\ref{r:artinianreduction}, substituting $y = 0$ into
% the resolution in Figure~\ref{fig:specialization} yields a minimal
% free resolution for the ideal $\<x_1^2 - x_2, x_1^3, x_1x_3, x_3^2\>$.
% More generally, for any numerical semigroup $S$, the construction
% in~\cite{kunzfaces3} produces a generating set for~$I_S$ with the
% property that substituting $y = 0$ yields an artinian binomial ideal
% depending only on the face $F$ of $C_m$ containing $S$.  Each binomial
% in this generating set results from first syzygies whose graded
% degrees lie in $\Ap(S)$, while each monomial generator results from
% one of the outer Betti elements of the Kunz poset of $F$.  It is thus
% natural to suspect the artinian reduction in
% Remark~\ref{r:artinianreduction} reflects a more general phenomenon.

% \begin{conj}\label{conj:artinianreduction}
% Substituting $y = 0$ into the minimal resolution for $J_S$ obtained in
% Theorem~\ref{t:specializationindependent} (or, analogously, the one
% for $I_S$ from Conjecture~\ref{conj:parametrizedfreeres}) yields a
% minimal free resolution of an artinian binomial ideal depending only
% on the face~$F$ containing~$S$.
% \end{conj}

There is a long history of topological formulas for Betti numbers of
graded ideals (e.g., Hochster's formula for squarefree monomial
ideals~\cite{Hoc77} (see \cite[Chapter~1]{cca}), squarefree divisor
complexes for toric ideals~\cite{squarefreedivisorcomplex}, or the use
of poset homology for computing Poincar\'e series of semigroup
algebras~\cite{shellmonoid}).  The following is thus a natural
problem.

\begin{prob}\label{prob:topologicalbetti}
Given a face~$F$ of~$C_m$, find a topological formula for extracting
the value in the equation in Corollary~\ref{c:gradedbetti} from the
Kunz poset of~$F$.
\end{prob}

As mentioned in Example~\ref{ex:kunzm4}, the ray $(1,2,1)$ of $C_4$
contains positive integer points, but none correspond to a numerical
semigroup under Proposition~\ref{prop:kunzinteriormed}.  Indeed, the
first and third coordinates of any such point must be equal, but any
point $(a_1, a_2, a_3) \in C_4$ corresponding to a numerical semigroup
must have $a_i \equiv i \bmod 4$ for each $i$.  However, the
construction in~\cite{kunzfaces1} still associates a poset to this
ray, and naively following the construction in~\cite{kunzfaces3} for a
binomial generating set with $y = 0$ yields the artinian binomial
ideal $\<x_1^2 - x_3^2, x_1^3, x_1x_3, x_3^3\>$.  This motivates the
following.

\begin{prob}\label{prob:negativefaces}
Extend the correspondance in Proposition~\ref{prop:kunzinteriormed} to
a family of lattice ideals that includes the defining toric ideals of
numerical semigroups but reaches points in faces of $C_m$ that do not
contain numerical semigroups.
\end{prob}

% Our final conjecture is supported by computational evidence for
% small~$m$.  (The reader is referred to~\cite{gradedsyzygies} for
% relevant definitions of Poincar\'e series and to the
% survey~\cite{infinitefreeressurvey} for an overview of recent
% developments on infinite free resolutions).

% \begin{conj}\label{conj:infinitefreeres}
% The base field\/~$\kk$ has the same Poincar\'e series over the
% numerical semigroup algebras $\kk[S]$ and\/~$\kk[T]$ whenever $S$
% and~$T$ lie in the same face~$F$ of~$C_m$.
% \end{conj}

%%%%%%%%%%%%%%%%%%%%%%%%%%%%%%%%%%%%%%%%%%%%%%%%%%%%%%%%%%%%%%%%%%%%%%%%%
\section*{Acknowledgements}%%%%%%%%%%%%%%%%%%%%%%%%%%%%%%%%%%%%%%%%%%%%%%
%raggedbottom%%%%%%%%%%%%%%%%%%%%%%%%%%%%%%%%%%%%%%%%%%%%%%%%%%%%%%%%%%%%

The authors would like to thank David Eisenbud for identifying portions of Kunz's work in~\cite{kunz} that had been overlooked all these years, and Frank-Olaf Schreyer for informing us about the content of Remark~\ref{r:schreyerresolution}.

%%%%%%%%%%%%%%%%%%%%%%%%%%%%%%%%%%%%%%%%%%%%%%%%%%%%%%%%%%%%%%%%%%%%%%%%%
%%%%%%%%%%%%%%%%%%%%%%%%%%%%%%%%%%%%%%%%%%%%%%%%%%%%
%%%%%%%%%%%%%%%%%%%%%%%%%%%%%%%%%%%%%%%%%%%%%%%%%%%%%%%%%%%%%%%%%%%%%%%%%

%%%%%%%%%%%%%%%%%%%%%%%%%%%%%%%%%%%%%%%%%%%%%%%%%%%%%%%%%%%%%%%%%%%%%%%%%
\end{document}